\definecolor{greenish}{RGB}{50,160,0}
\newtheorem{theorem}{Theorem}[section]
\newtheorem{lemma}[theorem]{Lemma}
\newtheorem{proposition}[theorem]{Proposition}
\theoremstyle{definition}
\newtheorem{example}[theorem]{Example}
\theoremstyle{remark}
\newtheorem{remark}[theorem]{Remark}
\numberwithin{equation}{section}
\newcommand{\fixed@sra}{$\vrule height 2\fontdimen22\textfont2 width 0pt\shortrightarrow$}
\newcommand{\shortarrow}[1]{
  \mathrel{\text{\rotatebox[origin=c]{\numexpr#1*45}{\fixed@sra}}}
}
\begin{document}
\title[{{\textsc{Uniqueness of minimal surfaces, Jacobi fields, and flat structures}}}]{Uniqueness of minimal surfaces, Jacobi fields, and flat structures} 
  
 \author[Hojoo Lee]{Hojoo Lee}
\address{Department of Mathematical Sciences, Seoul National University, Gwan Ak Ro 1, Gwanak Gu, Seoul 08826, Korea}
\email{momentmaplee@gmail.com}

\begin{abstract}
Inspired by the Finn-Osserman (1964), Chern (1969), do Carmo-Peng (1979) proofs of the Bernstein theorem, which 
characterizes flat planes as the only entire minimal graphs in ${\mathbb{R}}^{3}$, we 
prove a new rigidity theorem for associate families connecting the doubly periodic Scherk graphs and the singly periodic Scherk towers.
Our characterization of Scherk's surfaces discovers a new idea from the original Finn-Osserman curvature estimate.
Combining two generically independent flat structures introduced by Chern and Ricci, we shall construct geometric harmonic functions on 
minimal surfaces, and establish that periodic minimal surfaces admit fresh uniqueness results. 
\end{abstract}

 \subjclass{53A10, 49Q05}
 \keywords{Flat structures, harmonic functions, Jacobi fields, minimal surfaces, Scherk's surface}
 
 \maketitle

  \bigskip

\begin{center}
Dedicated to Jaigyoung Choe on the occasion of his $65$th birthday
\end{center}

  \bigskip

   \tableofcontents

\section{Ubiquitous flat structures}    

Flat structures are one of the most powerful tools in the  theory of minimal surfaces. J. Choe and 
R. Schoen \cite{ChoeSchoen2009} employed flat 
structures to investigate new isoperimetric inequalities. Flat structures play an essential role in number of proofs of the existence of minimal surfaces. 
For instance, see \cite{WeberWolf2002} by M. Weber and M. Wolf,  \cite{Huff2006} by R. Huff, and \cite{Douglas2014} by C. Douglas.
In particular, C. Douglas \cite{Douglas2014} proved the existence of a one parameter family of doubly periodic minimal surfaces,
which associates a singly periodic genus one helicoid as their limit surfaces.

To present a non-complex-analytic proof of the Bernstein theorem that 
entire minimal graphs should be flat, S.-S. Chern \cite{Chern1969} constructed flat structures using angle functions, which are the Jacobi fields 
induced by translations. Moreover, M. do Carmo and C. K. Peng  \cite[p. 905]{CP1979} also used Ricci's flat structure \cite[p. 124]{Bl1950} 
to prove generalized Bernstein's theorem that the complete stable minimal surfaces should be flat. The flat structures appeared 
in these applications could be explicitly expressed in terms of the holomoprhic data on minimal surfaces.

 Our main goal of this paper is to provide new rigidity results for minimal surfaces in terms of Jacobi fields and harmonic functions induced by two generically independent flat structures due to Chern and Ricci. Our characterization of Scherk's minimal surfaces discovers a new idea from
 the original Finn-Osserman curvature estimate for minimal surfaces.

    \newpage

 \section{Three Jacobi fields on Scherk's surfaces $\&$ Finn-Osserman curvature estimate}

R. Finn and  R. Osserman \cite{FO1964} presented a brilliant proof of Bernstein's beautiful theorem that the flat planes are the only \emph{entire} 
minimal graphs in  ${\mathbb{R}}^{3}$. To deduce their optimal curvature estimate result  \cite[p. 362]{FO1964} for arbitrary minimal 
graphs defined over round discs, they used a curvature estimate for the doubly periodic Scherk's graph defined over the white 
squares of a checkerboard  ${\mathbb{R}}^{2}$. Rescaling the Gauss curvature at the origin as one, the fundamental piece of  the doubly periodic 
 Scherk's surface can be represented as the graph
 \[
   z = {\mathcal{F}} \left(x, y \right) =  \ln \left( \frac{\cos y}{\cos x} \right) = \ln \left(  \cos y  \right) -  \ln \left(  \cos x  \right), \quad    
     - \frac{\pi}{2} <x, y< \frac{\pi}{2}, 
 \] 
which is oriented by the up-ward pointing unit normal vector field 
{\small{
\[
  {\mathbf{n}}= 
 \begin{bmatrix}
    {\mathbf{n}}_{1}   \\
    {\mathbf{n}}_{2}   \\
    {\mathbf{n}}_{3} 
 \end{bmatrix}
 = \frac{1}{\,\sqrt{\,1+ {{\mathcal{F}}_{x}}^{2} + {{\mathcal{F}}_{y}}^{2} \,}\,} 
 \begin{bmatrix}
  - {\mathcal{F}}_{x}  \\
 - {\mathcal{F}}_{y}  \\
  1
 \end{bmatrix}
  = \frac{1}{\,\sqrt{\,1+ {\tan}^{2}x +  {\tan}^{2}y \,}\,} 
 \begin{bmatrix}
  - \tan x  \\
  \tan y  \\
  1
 \end{bmatrix}.
\]
}}

The starting point of the Finn-Osserman proof is a neat observation that, on the Scherk's doubly periodic graph, 
Gauss curvature $\mathcal{K}$ can be explicitly estimated by a function of the Jacobi field ${\mathbf{n}}_{3}= {\langle {\mathbf{n}}, {\varepsilon}_{3} \rangle}_{{\mathbb{R}}^{3}}$ induced by the vertical unit vector field  ${\varepsilon}_{3}={\left(0, 0, 1 \right)}^{\intercal}$. Indeed, they observe that, on Scherk's doubly periodic graph $z = {\mathcal{F}} \left(x, y \right)$, the Gauss curvature function 
{\small{
\[
   \mathcal{K} =\mathcal{K}\left(x, y \right)
=   \frac{ {\mathcal{F}}_{xx}  {\mathcal{F}}_{yy} - { {\mathcal{F}}_{xy}}^{2}}{\,{\left(1+ { {\mathcal{F}}_{x}}^{2} + { {\mathcal{F}}_{y}}^{2} \right)}^{2}\,} 
=  -  \frac{ \, \left( 1 + {\tan}^{2} x   \right)  \left( 1 + {\tan}^{2} y \right) \,}{\,{\left( 1 + {\tan}^{2} x +  {\tan}^{2} y \right)}^{2}\,} <0
\]
}}
 obeys a simple but neat  \emph{curvature estimate} \cite[p. 359, Equation (21)]{FO1964} 
\begin{equation} \label{FO estimate}
  \sqrt{\, - \mathcal{K} \,} \leq   \frac{\,  1 + {{\mathbf{n}}_{3}}^{2}  }{2}.
\end{equation}

It would be not possible to write the curvature $\mathcal{K}$ sorely in terms of the \emph{single} Jacobi field 
\[
{\mathbf{n}}_{3}= {\langle {\mathbf{n}}, {\varepsilon}_{3} \rangle}_{{\mathbb{R}}^{3}}
\]
induced by the \emph{vertical} vector field 
${\varepsilon}_{3}={\left(0, 0, 1 \right)}^{\intercal}$. What was not unravelled in 
 Finn-Osserman's original paper \cite{FO1964} is our new observation that, on the Scherk graph,   
its Gauss curvature $\mathcal{K}$ could be explicitly expressed as a function of \emph{two} Jacobi fields 
\[
{\mathbf{n}}_{1} =  {\langle {\mathbf{n}}, {\varepsilon}_{1} \rangle}_{{\mathbb{R}}^{3}} \;\, \text{and} \;\, {\mathbf{n}}_{2}=  {\langle {\mathbf{n}}, {\varepsilon}_{2} \rangle}_{{\mathbb{R}}^{3}}
\]
induced by \emph{two orthogonal horizontal} vector fields ${\varepsilon}_{1}={\left(1, 0, 0 \right)}^{\intercal}$ and ${\varepsilon}_{2}={\left(0, 1, 0 \right)}^{\intercal}$, respectively. 

Indeed, on the  Scherk graph  $z= {\mathcal{F}} \left(x, y \right)=  \ln \left(  \cos y  \right) -  \ln \left(  \cos x  \right)$, which was normalized so that its Gauss curvature at the origin is equal to $1$, we discover a new \emph{curvature identity} 
\begin{equation} \label{FO estimate twisted}
  - \mathcal{K}   = \left( 1 - {{\mathbf{n}}_{1}}^{2} \right) \left( 1 - {{\mathbf{n}}_{2}}^{2} \right). 
\end{equation}
Moreover, the curvature identity (\ref{FO estimate twisted}) immediately implies Finn-Osserman's curvature estimate:
\[
 \sqrt{  \, - \mathcal{K} \,} = \sqrt{\, \left( 1 - {{\mathbf{n}}_{1}}^{2} \right) \left( 1 - {{\mathbf{n}}_{2}}^{2} \right) \,} \leq \frac{\, \left( 1 - {{\mathbf{n}}_{1}}^{2} \right) + \left( 1 - {{\mathbf{n}}_{2}}^{2} \right)  \, }{2}    =   \frac{\,  1 + {{\mathbf{n}}_{3}}^{2}  }{2}.
\]

We shall prove that the geometric equality (\ref{FO estimate twisted}) captures the uniqueness of Scherk's surfaces, up to associate families, and present  
 periodic minimal surfaces, which also admit rigidity results in terms of Gauss curvature and finitely many Jacobi fields induced by ambient translations.

  \begin{figure}[H]
 \centering
 \includegraphics[height=4.20cm]{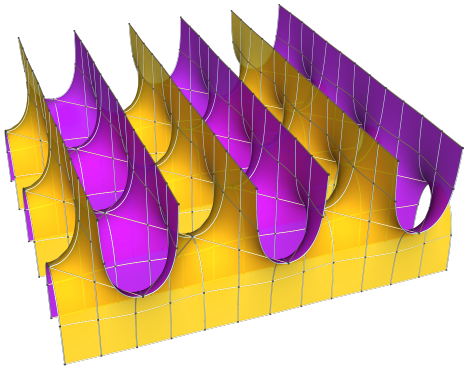} \quad \quad 
  \includegraphics[height=4.20cm]{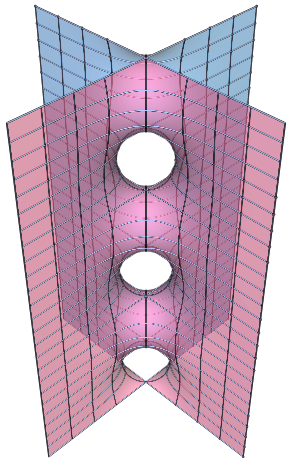}
 \caption{\small{Approximations \cite{WebEnn} of parts of Scherk's periodic minimal surfaces}} 
 \end{figure}

\begin{theorem}[\textbf{Rigidity of Scherk's surfaces in terms of Gauss curvature and two Jacobi fields}] \label{Scherk Uniqueness}
If a negatively curved minimal surface with Gauss curvature $\mathcal{K}$ and the unit normal vector field $\mathbf{n}$ admits two Jacobi fields $ {{\mathbf{n}}_{{}_{{\mathbf{V}}_{1}}}} =  {\langle {\mathbf{n}}, {\mathbf{V}}_{1}\rangle}_{{\mathbb{R}}^{3}}$ and 
${{\mathbf{n}}_{{}_{{\mathbf{V}}_{2}}}} = {\langle {\mathbf{n}}, {\mathbf{V}}_{2}\rangle}_{{\mathbb{R}}^{3}}$, induced by two constant \textbf{orthogonal} unit vector fields ${\mathbf{V}}_{1}$ and ${\mathbf{V}}_{2}$ in  ${\mathbb{R}}^{3}$, such that the Chern-Ricci harmonic function 
\begin{equation} \label{Scherk two Jacobi}
   \ln \frac{\, \left(  1 - {{\mathbf{n}}_{{}_{{\mathbf{V}}_{1}}}}^{2} \right) \left( 1 - {{\mathbf{n}}_{{}_{{\mathbf{V}}_{2}}}}^{2} \right)  \,}{ - \mathcal{K} }  
\end{equation}
is constant, then the minimal surface should be congruent to (a part of) a member of the associate family connecting a doubly periodic Scherk graph and a singly periodic Scherk tower, up to homotheties. 
\end{theorem}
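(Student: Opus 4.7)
The plan is to translate the hypothesis into Weierstrass coordinates and read off that the Weierstrass data are those of a member of the Scherk associate family, up to scale. Write $(g, \omega = \eta\, dz)$ for the Weierstrass data of the minimal surface, with $g$ the meromorphic Gauss map and $\omega$ a holomorphic $1$-form; after a rigid motion of $\mathbb{R}^{3}$, assume $\mathbf{V}_{1} = (1,0,0)^{\intercal}$ and $\mathbf{V}_{2} = (0,1,0)^{\intercal}$. Using $\mathbf{n} = (1+|g|^{2})^{-1}\bigl(2\,\mathrm{Re}\,g,\;2\,\mathrm{Im}\,g,\;|g|^{2}-1\bigr)$ together with the factorizations $(1+|g|^{2})^{2} - 4(\mathrm{Re}\,g)^{2} = |g-1|^{2}|g+1|^{2}$ and $(1+|g|^{2})^{2} - 4(\mathrm{Im}\,g)^{2} = |g-i|^{2}|g+i|^{2}$, a short computation gives
\[
1 - {\mathbf{n}_{{}_{\mathbf{V}_{1}}}}^{2} = \frac{|g^{2}-1|^{2}}{(1+|g|^{2})^{2}}, \qquad 1 - {\mathbf{n}_{{}_{\mathbf{V}_{2}}}}^{2} = \frac{|g^{2}+1|^{2}}{(1+|g|^{2})^{2}},
\]
while the classical Weierstrass expression $-\mathcal{K} \propto |g'|^{2}/\bigl((1+|g|^{2})^{4}|\eta|^{2}\bigr)$ combines with these to produce the crucial identity
\[
\frac{\bigl(1-{\mathbf{n}_{{}_{\mathbf{V}_{1}}}}^{2}\bigr)\bigl(1-{\mathbf{n}_{{}_{\mathbf{V}_{2}}}}^{2}\bigr)}{-\mathcal{K}} \;\propto\; \left|\frac{(g^{4}-1)\,\omega}{dg}\right|^{2}.
\]

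Hence the Chern--Ricci harmonic function (\ref{Scherk two Jacobi}) differs by an additive constant from $2\log|F|$ for the globally defined meromorphic function $F := (g^{4}-1)\,\omega / dg$ on the underlying Riemann surface. The hypothesis that (\ref{Scherk two Jacobi}) is constant then forces $|F|$ to be a positive constant; by the open mapping theorem, a non-constant meromorphic function on a connected Riemann surface cannot have constant modulus, so $F \equiv c$ for some $c \in \mathbb{C}^{*}$. Solving for the height differential gives
\[
\omega = \frac{c\, dg}{g^{4}-1},
\]
and, using $g$ as a local holomorphic uniformizer off the discrete branch locus $\{dg = 0\}$, this is precisely---up to the homothety factor $|c|$ and the associate-family phase $e^{i\arg c}$---the standard Weierstrass pair $(w,\; dw/(w^{4}-1))$ of the doubly periodic Scherk graph. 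The phases $\arg c = 0$ and $\arg c = \pi/2$ recover the Scherk graph and the Scherk tower respectively, and the intermediate phases sweep out their associate family. Since the Weierstrass data determine a minimal immersion up to a translation of $\mathbb{R}^{3}$, real-analyticity and connectedness propagate the coincidence from the dense open set where $g$ is a local coordinate to the whole surface.

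The main obstacle is the algebraic collapse in the identity above: the orthogonality of $\mathbf{V}_{1}$ and $\mathbf{V}_{2}$ is essential because it arranges the factorization $|g^{2}-1|^{2}\,|g^{2}+1|^{2} = |g^{4}-1|^{2}$, whose four zeros $\{\pm 1, \pm i\}$ correspond to the four normal directions $\pm \mathbf{V}_{1}, \pm \mathbf{V}_{2}$ that the Gauss map of Scherk's surfaces omits. Without this orthogonality, the hypothesis would split into two independent modulus conditions and would not pin down a single meromorphic relation between $g$ and $\omega$; after it, the remaining steps are standard complex-analytic bookkeeping---Liouville-type rigidity of meromorphic functions with constant modulus, and the translation-uniqueness of the Weierstrass representation.
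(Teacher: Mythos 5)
Your proposal is correct and follows essentially the same route as the paper: the paper likewise passes to the Weierstrass data, uses the identities $1-\mathbf{n}_{{}_{\mathbf{V}}}^{2}=|\mathbf{G}^{2}\mp 1|^{2}/(1+|\mathbf{G}|^{2})^{2}$ and the curvature formula (via Lemma \ref{CRW}) to show that constancy of the Chern--Ricci function forces $\left|\mathbf{G}^{-1}(d\mathbf{h}/d\mathbf{G})(\mathbf{G}^{4}-1)\right|$ to be constant, and then reads off $\frac{1}{\mathbf{G}}d\mathbf{h}=\frac{\rho e^{i\Theta}}{\mathbf{G}^{4}-1}d\mathbf{G}$, exactly your $F\equiv c$. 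Your write-up is in fact slightly more explicit than the paper's at the final step (the open-mapping argument and the identification of the phase $\arg c$ with the associate-family parameter), but the underlying argument is the same.
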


For an alternative geometric interpretation of the Chern-Ricci harmonic function (\ref{Scherk two Jacobi}) in terms of the flat structures on minimal surfaces, see Lemma \ref{CR def} and Remark \ref{StwoFS}.

 \section{Two non-complex-analytic proofs of Bernstein's theorem}
 
It is natural to ask whether or not the function (\ref{Scherk two Jacobi}) in Theorem \ref{Scherk Uniqueness} admit natural \emph{geometric} meaning for 
arbitrary minimal surfaces. The answer is yes. Combining the key ingredients in two completely different proofs of Bernstein's theorem given by Chern  \cite{Chern1969} and do Carmo-Peng  \cite{CP1979} answers the reason why the functions in the linear combination (\ref{Scherk two Jacobi}) should 
be \emph{harmonic} on the negatively curved minimal surfaces. Their non-complex-analytic proofs, not exploiting the Enneper-Weierstrass reprsentation directly, require two \emph{generically independent} flat structures.
 
\begin{proposition}[\textbf{Harmonic functions induced by two flat structures}] \label{two flat}
 Let  $\Sigma$ be a minimal surface in ${\mathbb{R}}^{3}$ with the Gauss curvature function ${\mathcal{K}}={\mathcal{K}}_{ {\mathbf{g}}_{{}_{\Sigma}} } 
 \leq 0$ and the unit normal vector field $\mathbf{n}$ on  $\Sigma$. Given a constant unit vector field ${\mathbf{V}}$ in ${\mathbb{R}}^{3}$, we introduce the Jacobi field $\mathbf{n}_{{}_{\mathbf{V}}}:\Sigma \to \left[-1, 1\right]$ defined by 
\[
  {\mathbf{n}}_{{}_{\mathbf{V}}}= {\langle \mathbf{n}, {\mathbf{V}} \rangle}_{{\mathbb{R}}^{3}}, \quad p \in \Sigma.
\]
 It is called the angle function induced by the translation generated by the vector field ${\mathbf{V}}$ in the literature.
\begin{enumerate}
\item[\textbf{(a)}] \textbf{Chern's flat structure \cite[p. 53, Equation (2)]{Chern1969}}. The conformally changed metric 
\begin{equation} \label{Chern flat}
 {\mathbf{g}}_{{}_{\textrm{Chern}}} ={\left( 1 +   \mathbf{n}_{{}_{\mathbf{V}}} \right)}^{2}  {\mathbf{g}}_{{}_{\Sigma}}
\end{equation}
is flat on the points where $\mathbf{n}_{{}_{\mathbf{V}}}>-1$.
\item[\textbf{(b1)}] \textbf{Ricci's flat structure \cite[p. 124]{Bl1950}}. It was used in the do Carmo-Peng proof \cite[p. 905]{CP1979} of generalized Bernstein's theorem that the complete stable minimal surfaces are flat. The metric 
\begin{equation} \label{Ricci flat}
 {\mathbf{g}}_{{}_{\textrm{Ricci}}} = \sqrt{- {\mathcal{K}}_{ {\mathbf{g}}_{{}_{\Sigma}}} } \, {\mathbf{g}}_{{}_{\Sigma}}
\end{equation}
is flat on the points where ${\mathcal{K}}_{ {\mathbf{g}}_{{}_{\Sigma}}}<0$. 
\item[\textbf{(b2)}] Ricci \cite{Bl1950, Law1971} discovered that that every metric satisfying Ricci condition \ref{Ricci flat} can be realized on a negatively curved minimal surfaces in ${\mathbb{R}}^{3}$. See Lawson's counterexample \cite[Remark 12.1]{Law1970} for the metics having a \emph{flat} point. 
\item[\textbf{(c)}] \textbf{Chern-Ricci harmonic functions} \cite[Theorem 2.1]{Lee2017}. \label{curvature cc} As observed in \cite[Theorem 2.1]{Lee2017} and the two-page expository note \cite{Lee2018}, these two flat structures induce harmonic functions. Indeed, 
whenever ${\mathcal{K}}<0$ and $\mathbf{n}_{{}_{\mathbf{V}}}>-1$ on the minimal surface, since two conformally changed metrics 
\[
\widetilde{g} = {\mathbf{g}}_{{}_{\textrm{Ricci}}}   \quad \text{and} \quad 
{\mathbf{g}}_{{}_{\textrm{Chern}}}= \star \widetilde{g}  =  \frac{  { \left( 1 + \mathbf{n}_{{}_{\mathbf{V}}} \right)}^{2}  }{  \sqrt{  \,- \mathcal{K} \, }  }   {\mathbf{g}}_{{}_{\textrm{Ricci}}} 
\]
are \emph{flat}, the classical curvature formula for conformally changed metrics 
 \begin{equation} \label{curvature cc}
 {\mathcal{K}}_{ \star \widetilde{g} } = \frac{1}{\star}   {\mathcal{K}}_{  \widetilde{g} }  - \frac{1}{2 \star} {\triangle}_{ \widetilde{g} } \ln \star 
 \end{equation}
 immediately implies that the induced logarithmic quotient function
\begin{equation} \label{CR harmonic}
   \ln  \star  =   \ln    \frac{  { \left( 1 + \mathbf{n}_{{}_{\mathbf{V}}} \right)}^{2}  }{  \sqrt{  \,- \mathcal{K} \, }  }  
\end{equation}
should be harmonic with respect to metrics ${\mathbf{g}}_{{}_{\textrm{Ricci}}} $, ${\mathbf{g}}_{{}_{\textrm{Chern}}}$, and $ {\mathbf{g}_{{}_{\Sigma}}}$.
 We call it the \textbf{Chern-Ricci harmonic function} \cite[Theorem 2.1]{Lee2017} with respect to the unit vector field ${\mathbf{V}}$.
\end{enumerate}
\end{proposition}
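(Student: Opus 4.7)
The plan is to reduce everything to the conformal-change formula (\ref{curvature cc}) stated in part (c). Writing $\tilde g = e^{2u}\mathbf{g}_{{}_{\Sigma}}$, that formula forces $\tilde g$ to be flat exactly when $\Delta_{\mathbf{g}_{{}_{\Sigma}}} u = \mathcal{K}$. Both (a) and (b1) then reduce to a single Liouville-type identity for the conformal factor, after which (c) follows from one more application of the same formula.

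For part (a), take $u = \ln(1+\mathbf{n}_{{}_{\mathbf{V}}})$. Two ingredients are needed. First, since the Gauss map $\mathbf{n}:\Sigma\to S^{2}$ is harmonic on any minimal surface, each angle function satisfies the Jacobi equation $\Delta_{\mathbf{g}_{{}_{\Sigma}}} \mathbf{n}_{{}_{\mathbf{V}}} = -|A|^{2}\mathbf{n}_{{}_{\mathbf{V}}} = 2\mathcal{K}\mathbf{n}_{{}_{\mathbf{V}}}$. Second, in a local principal frame $(e_{1},e_{2})$ diagonalizing the shape operator $-d\mathbf{n}$, minimality forces $k_{1}=-k_{2}$ and hence $k_{1}^{2}=k_{2}^{2}=-\mathcal{K}$; decomposing $\mathbf{V}=\mathbf{n}_{{}_{\mathbf{V}}}\mathbf{n}+\mathbf{V}^{T}$ with $|\mathbf{V}^{T}|^{2}=1-\mathbf{n}_{{}_{\mathbf{V}}}^{2}$ then yields the pointwise identity $|\nabla\mathbf{n}_{{}_{\mathbf{V}}}|^{2}=-\mathcal{K}(1-\mathbf{n}_{{}_{\mathbf{V}}}^{2})$. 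The chain rule gives
\[
\Delta_{\mathbf{g}_{{}_{\Sigma}}} \ln(1+\mathbf{n}_{{}_{\mathbf{V}}}) = \frac{\Delta_{\mathbf{g}_{{}_{\Sigma}}}\mathbf{n}_{{}_{\mathbf{V}}}}{1+\mathbf{n}_{{}_{\mathbf{V}}}} - \frac{|\nabla\mathbf{n}_{{}_{\mathbf{V}}}|^{2}}{(1+\mathbf{n}_{{}_{\mathbf{V}}})^{2}} = \frac{2\mathcal{K}\mathbf{n}_{{}_{\mathbf{V}}}+\mathcal{K}(1-\mathbf{n}_{{}_{\mathbf{V}}})}{1+\mathbf{n}_{{}_{\mathbf{V}}}} = \mathcal{K},
\]
so (\ref{curvature cc}) kills the curvature of $\mathbf{g}_{{}_{\textrm{Chern}}}$ wherever $\mathbf{n}_{{}_{\mathbf{V}}}>-1$.

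For part (b1), set $u = \tfrac{1}{4}\ln(-\mathcal{K})$, so flatness of $\mathbf{g}_{{}_{\textrm{Ricci}}}$ is equivalent to the classical identity $\Delta_{\mathbf{g}_{{}_{\Sigma}}} \ln(-\mathcal{K}) = 4\mathcal{K}$. The cleanest derivation uses Enneper-Weierstrass data $(G,\omega)$ on an isothermal chart: the induced conformal factor $\rho$ is proportional to $(1+|G|^{2})|\omega/dz|$ while $-\mathcal{K}\rho^{4}$ is proportional to $|G_{z}|^{2}/(1+|G|^{2})^{2}$; passing to logarithms, the holomorphic contributions vanish under the Euclidean Laplacian and only the $-4\ln(1+|G|^{2})$ term survives, producing precisely $4\mathcal{K}$ after division by $\rho^{2}$. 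Part (b2) is Ricci's converse theorem, imported verbatim from \cite{Bl1950, Law1971}. For part (c), the ratio $\star=(1+\mathbf{n}_{{}_{\mathbf{V}}})^{2}/\sqrt{-\mathcal{K}}$ is exactly the conformal factor relating $\mathbf{g}_{{}_{\textrm{Chern}}}$ to $\mathbf{g}_{{}_{\textrm{Ricci}}}$; plugging the two flat metrics into (\ref{curvature cc}) collapses the formula to $\Delta_{\mathbf{g}_{{}_{\textrm{Ricci}}}} \ln\star = 0$, and the conformal invariance of the Laplacian on functions over surfaces lifts this harmonicity to $\mathbf{g}_{{}_{\textrm{Chern}}}$ and $\mathbf{g}_{{}_{\Sigma}}$.

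The main obstacle is the gradient identity $|\nabla\mathbf{n}_{{}_{\mathbf{V}}}|^{2}=-\mathcal{K}(1-\mathbf{n}_{{}_{\mathbf{V}}}^{2})$: this is where $H=0$ is indispensable, because only the minimal case forces $k_{1}^{2}=k_{2}^{2}$, which is what turns the frame-dependent sum $k_{1}^{2}v_{1}^{2}+k_{2}^{2}v_{2}^{2}$ into a curvature scalar times $|\mathbf{V}^{T}|^{2}$. Ricci's identity in (b1) similarly relies on minimality through the holomorphicity of the Gauss map, so both steps break down for general constant-mean-curvature surfaces; once these two minimality-driven identities are in hand, the rest of the proposition is a direct bookkeeping exercise with (\ref{curvature cc}).
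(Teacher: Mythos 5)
Your proposal is correct. Note that the paper itself does not really prove this proposition: parts (a), (b1), (b2) are imported by citation (Chern, Blaschke/Ricci, do Carmo--Peng, Lawson), and the only argument actually given is the two-line deduction in part (c) --- both metrics are flat, so the conformal-change formula (\ref{curvature cc}) with ${\mathcal{K}}_{\star\widetilde g}={\mathcal{K}}_{\widetilde g}=0$ forces ${\triangle}_{\widetilde g}\ln\star=0$, and conformal invariance of harmonicity in dimension two transfers this to ${\mathbf{g}}_{{}_{\Sigma}}$ and ${\mathbf{g}}_{{}_{\textrm{Chern}}}$. Your treatment of (c) is exactly this. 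What you add, correctly, are the standard proofs of the two cited flatness statements: for (a), the Jacobi equation ${\triangle}\,{\mathbf{n}}_{{}_{\mathbf{V}}}=2\mathcal{K}\,{\mathbf{n}}_{{}_{\mathbf{V}}}$ together with the minimality-driven gradient identity ${\vert\nabla {\mathbf{n}}_{{}_{\mathbf{V}}}\vert}^{2}=-\mathcal{K}\left(1-{{\mathbf{n}}_{{}_{\mathbf{V}}}}^{2}\right)$ (which indeed uses $k_{1}^{2}=k_{2}^{2}=-\mathcal{K}$) gives ${\triangle}\ln\left(1+{\mathbf{n}}_{{}_{\mathbf{V}}}\right)=\mathcal{K}$, which is precisely the flatness criterion; for (b1), the Weierstrass computation reduces $\sqrt{-\mathcal{K}}\,{\mathbf{g}}_{{}_{\Sigma}}$ to $\vert f\,\mathbf{G}'\vert\,{\vert dz\vert}^{2}$ with $f\,\mathbf{G}'$ holomorphic, equivalently ${\triangle}\ln(-\mathcal{K})=4\mathcal{K}$. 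Both computations check out, and your closing remark correctly identifies where $H=0$ enters. So the proof is sound and, on the one part the paper argues, identical in method; the rest is a faithful filling-in of the cited background.
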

 
 \begin{remark}[\textbf{Rigidity of Enneper's algebraic minimal surfaces in terms of two flat structures}]
 It is natural to ask whether or not the two flat metrics ${\mathbf{g}}_{{}_{\textrm{Chern}}} ={\left( 1 +   \mathbf{n}_{{}_{\mathbf{V}}} \right)}^{2}  {\mathbf{g}}_{{}_{\Sigma}}$ and ${\mathbf{g}}_{{}_{\textrm{Ricci}}} = \sqrt{- {\mathcal{K}}_{ {\mathbf{g}}_{{}_{\Sigma}}} } \, {\mathbf{g}}_{{}_{\Sigma}}$ are indeed generically independent. The answer is essentially yes. There exists one exception.  \cite[Theorem 3.1]{Lee2017} shows that, on a negatively curved minimal surface, the geometric equality
 \[
      {\mathbf{g}}_{{}_{\textrm{Chern}}} = c \,  {\mathbf{g}}_{{}_{\textrm{Ricci}}}
 \]
holds for some constant $c>0$ if and only if it becomes the Enneper surface. It is well-known that each member of the associate family of an Enneper surface is congruent to itself. The key point of this paper is to provide new
 rigidity results, which generalize \cite[Theorem 3.1]{Lee2017}. For instance, Theorem \ref{Scherk Uniqueness} captures the uniqueness of Scherk's surfaces up
 to associate families.
\end{remark}

 \begin{figure}[H]
 \centering
 \includegraphics[height=4.20cm]{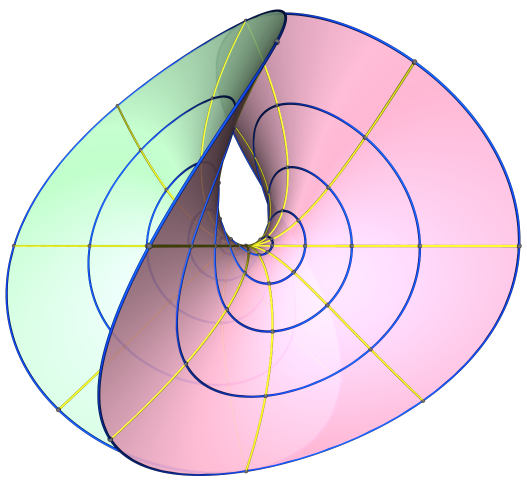}
 \caption{\small{An approximation \cite{WebEnn} of a part of Enneper's surface with total curvature $-4 \pi$}} 
 \end{figure}

 \section{Harmonic functions induced by Chern-Ricci flat structures}

Two coordinates ${\mathbf{x}}_{1}$ and ${\mathbf{x}}_{2}$ in the flat plane ${\mathbb{R}}^{2}$ are harmonic with respect to the flat metric ${d  {\mathbf{x}}_{1} }^{2} +  {d  {\mathbf{x}}_{2} }^{2} $. More generally, linear coordinates in ${\mathbb{R}}^{3}$ induce harmonic functions on minimal surfaces. The minimal surface theory and complex analysis are intertwined by the Enneper-Weierstrass representation. For instance, see \cite[p. 4]{Kar2003}, \cite[Lemma 8.2]{Oss86}, 
and \cite[Section 3.3]{Weber2001}. 

In Euclidean space $\left({\mathbb{R}}^{3}, {d  {\mathbf{x}}_{1} }^{2} +  {d  {\mathbf{x}}_{2} }^{2}  +  {d  {\mathbf{x}}_{3} }^{2} \right)$, a non-planar minimal surface $\Sigma$ is determined by the Weierstrass data $\left({\mathbf{G}}, d\mathbf{h}  \right)$, which encodes their geometric information. The meromorphic function ${\mathbf{G}}$ is obtained by applying the stereographic projection of the Gauss map. 
The height differential  $d \mathbf{h}$ is the holomorphic extension of the differential $d {\mathbf{x}}_{3}$. The 
Enneper-Weierstrass representation guarantees that the conformal harmonic patches can be constructed by solving the global integration problem
{\small{
\[ 
 \begin{bmatrix}
  d  {\mathbf{x}}_{1}   \\
  d  {\mathbf{x}}_{2}   \\
  d {\mathbf{x}}_{3} 
 \end{bmatrix}
 =
  \begin{bmatrix}
 \;   \textrm{Re} \left(  \;  \frac{\,1\,}{2} \left(  \frac{1}{{\mathbf{G}}} - {\mathbf{G}} \right) d\mathbf{h}  \;  \right)  \;  \\
 \;  \textrm{Re} \left(  \;  \frac{\,i\,}{2} \left(  \frac{1}{{\mathbf{G}}} + {\mathbf{G}} \right) d\mathbf{h}   \;  \right)  \;  \\
 \;  \textrm{Re} \left(  \;  d \mathbf{h}  \;  \right)  \; 
  \end{bmatrix}.
 \]
}}
 The associate family $\left\{{\Sigma}_{\theta} \right\}_{\theta \in \left[0, \frac{\pi}{2} \right]}$ of the minimal surface $\Sigma={\Sigma}_{0}$ can be obtained by integrating 
{\small{
\[ 
 \begin{bmatrix}
  d  {\left( {\mathbf{x}}_{\theta} \right)}_{1}   \\
  d  {\left( {\mathbf{x}}_{\theta} \right)}_{2}   \\
  d {\left( {\mathbf{x}}_{\theta} \right)}_{3}
 \end{bmatrix}
 =
  \begin{bmatrix}
 \;   \textrm{Re} \left(  \;  \frac{\,1\,}{2} \left(  \frac{1}{{\mathbf{G}}} - {\mathbf{G}} \right)  e^{i \theta} d\mathbf{h}  \;  \right)  \;  \\
 \;  \textrm{Re} \left(  \;  \frac{\,i\,}{2} \left(  \frac{1}{{\mathbf{G}}} + {\mathbf{G}} \right) e^{i \theta} d\mathbf{h}   \;  \right)  \;  \\
 \;  \textrm{Re} \left(  \;  e^{i \theta} d \mathbf{h}  \;  \right)  \; 
  \end{bmatrix}, \quad \theta \in \left[0, \frac{\pi}{2} \right].
 \]
}}
 In particular, ${\Sigma}_{\frac{\pi}{2}}$ is called the conjugate minimal surface of $\Sigma={\Sigma}_{0}$. Schwarz proved that any minimal surfaces locally isometric to $\Sigma$ should be congruent to a member of its associate family. We briefly sketch explicit examples of classical minimal surfaces with their Weierstrass data.

\bigskip

\begin{example}[\textbf{Complex analytic construction of minimal surfaces}] We take the meromorphic Gauss map ${\mathbf{G}}$ as the local conformal coordinates $\left( \, \text{Re} \left( {\mathbf{G}} \right), \, \text{Im} \left( {\mathbf{G}} \right)\, \right)$ on the minimal surface. 
\end{example}
\begin{enumerate}
  \item[]
\item[\textbf{(a)}] Enneper's surface with the Weierstrass data $\frac{1}{\,{\mathbf{G}}\,} d\mathbf{h}= d{\mathbf{G}}$ has the total curvature $-4\pi$. 
 For Enneper surfaces, the isometric deformation by the associate family induce rotations.
   \item[]
\item[\textbf{(b1)}] The Weierstrass data $\frac{1}{\,{\mathbf{G}}\,} d\mathbf{h}= \frac{1}{\, {\mathbf{G}}^{2}\,} d{\mathbf{G}}$ gives the catenoid with 
the total curvature $-4\pi$. R. Schoen \cite{Schoen1983}  proved a rigidity result that the catenoids are the only complete minimal surface embedded with finite total curvature and two ends. F. L\'{o}pez and A. Ros \cite{LO1991} established that planes and catenoids are the only properly embedded minimal surfaces with finite total curvature and genus zero.  
 \begin{figure}[H]
 \centering
 \includegraphics[height=4.20cm]{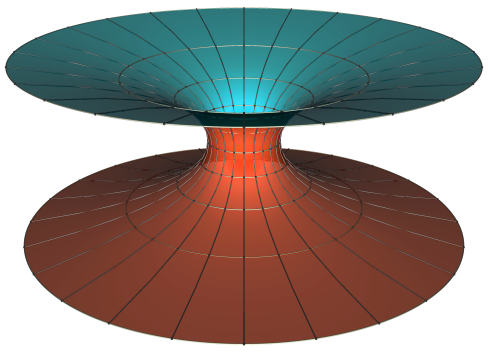} \quad \quad  \includegraphics[height=4.20cm]{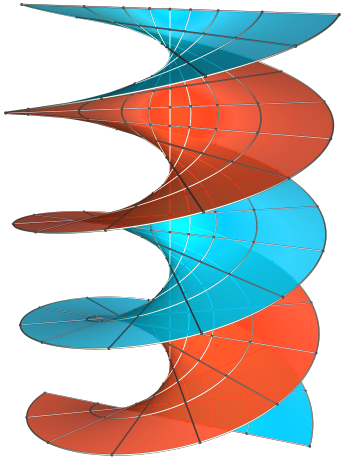}
 \caption{\small{Approximations \cite{WebEnn} of parts of a catenoid and a helicoid}} 
 \end{figure}
\item[\textbf{(b2)}] The catenoid is also induced by the Weierstrass data $\frac{1}{\,{\mathbf{G}}\,}  d\mathbf{h}= \frac{1}{\, {\left( {\mathbf{G}}^{2} -1 \right) }^{2} \,} d{\mathbf{G}}$. It recovers the data of the Jorge-Meeks $k$-noid \cite[Section 5]{JM1983} with $k=2$. See 
also \cite[Section 4.4]{Weber2001}. 
  \item[]
\item[\textbf{(c)}] We obtain helicoids by taking conjugate minimal surfaces of catenoids. The uniqueness of the associate families of 
catenoids and helicoids in terms of a Chern-Ricci harmonic function is proved in \cite[Theorem 3.3]{Lee2017}.
\end{enumerate}

  \begin{lemma}[\textbf{Chern-Ricci harmonic functions and generalization of the Chern metric}]  \label{CR def} Given a minimal surface $\Sigma$   
with the Gauss curvature $\mathcal{K}$  and the Weierstrass data $\left({\mathbf{G}}, d\mathbf{h}  \right)$ and a finitely many distinct constant 
unit vector fields
${\mathbf{V}}_{1}$, $\cdots$, ${\mathbf{V}}_{N}$ and weights ${\lambda}_{1}$, $\cdots$,  $\lambda_{N} \in \mathbb{Q}$ with ${\lambda}_{1} + 
\cdots + {\lambda}_{N}=1$, we introduce the Chern-Ricci function by the linear combination
\begin{equation} \label{CR generalized 1}
    \sum_{ j = 1}^{N}   {\lambda}_{j}  \ln    \frac{  { \left( 1 + \mathbf{n}_{{}_{ {\mathbf{V}}_{j} }} \right)}^{2}  }{  \sqrt{  \,- \mathcal{K} \, }  }  
\end{equation}
on the points where $\mathcal{K}<0$ and $1 + \mathbf{n}_{{}_{ {\mathbf{V}}_{j} }}>0$ for all $j \in \{1, \cdots, N \}$.
Then, the Chern-Ricci function is harmonic, and the conformally changed metric 
\begin{equation} \label{CR generalized 2}
   \left[ \prod_{ j = 1}^{N}      { \left( 1 + \mathbf{n}_{{}_{ {\mathbf{V}}_{j} }} \right)}^{2 {\lambda}_{j} }  \right] \mathbf{g}_{{}_{\Sigma}} 
\end{equation}
is flat. The case when $N=1$ indicates the flatness of the Chern metric (\ref{Chern flat}) in Proposition \ref{two flat}.
\end{lemma}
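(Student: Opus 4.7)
The plan is to derive both assertions as immediate weighted extensions of Proposition~\ref{two flat}. The entire geometric content -- harmonicity of the single-vector Chern-Ricci function and flatness of the Chern and Ricci metrics -- is already packaged in parts (a), (b1), (c) of that proposition, so what remains is routine linear bookkeeping in which the normalization $\lambda_1 + \cdots + \lambda_N = 1$ plays a decisive role.

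First I would extract two pointwise Laplacian identities on $(\Sigma, \mathbf{g}_{{}_{\Sigma}})$ from Proposition~\ref{two flat}. Applying the conformal curvature formula (\ref{curvature cc}) to the flat Chern metric ${\mathbf{g}}_{{}_{\textrm{Chern}}}=(1+\mathbf{n}_{{}_{{\mathbf{V}}_j}})^2 \mathbf{g}_{{}_{\Sigma}}$ from (a) yields
\[
\triangle_{\mathbf{g}_{{}_{\Sigma}}} \ln (1+\mathbf{n}_{{}_{{\mathbf{V}}_j}}) \;=\; \mathcal{K}, \qquad j = 1, \ldots, N,
\]
while the same formula applied to the flat Ricci metric ${\mathbf{g}}_{{}_{\textrm{Ricci}}}= \sqrt{-\mathcal{K}}\, \mathbf{g}_{{}_{\Sigma}}$ from (b1) yields
\[
\triangle_{\mathbf{g}_{{}_{\Sigma}}} \ln \sqrt{-\mathcal{K}} \;=\; 2\mathcal{K}.
\]

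For the flatness claim (\ref{CR generalized 2}), set $\sigma := \prod_{j=1}^{N}(1+\mathbf{n}_{{}_{{\mathbf{V}}_j}})^{2\lambda_j}$ and compute, using the first identity,
\[
\triangle_{\mathbf{g}_{{}_{\Sigma}}} \ln \sigma \;=\; 2\sum_{j=1}^{N} \lambda_j \, \triangle_{\mathbf{g}_{{}_{\Sigma}}} \ln(1+\mathbf{n}_{{}_{{\mathbf{V}}_j}}) \;=\; 2\mathcal{K}\sum_{j=1}^{N}\lambda_j \;=\; 2\mathcal{K},
\]
the last equality being where the hypothesis $\sum_j \lambda_j = 1$ enters. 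Plugging this into the conformal curvature formula (\ref{curvature cc}) with conformal factor $\sigma$ gives ${\mathcal{K}}_{\sigma\mathbf{g}_{{}_{\Sigma}}} = \sigma^{-1}\left(\mathcal{K} - \tfrac{1}{2}\cdot 2\mathcal{K}\right) = 0$. Harmonicity of the generalized Chern-Ricci function (\ref{CR generalized 1}) follows analogously: the normalization $\sum_j \lambda_j = 1$ collapses the separate $-\tfrac{1}{2}\ln(-\mathcal{K})$ pieces into a single Ricci term, reducing the sum to $\ln \sigma - \ln\sqrt{-\mathcal{K}}$, whose $\mathbf{g}_{{}_{\Sigma}}$-Laplacian equals $2\mathcal{K} - 2\mathcal{K} = 0$ by the two identities above.

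There is essentially no obstacle beyond checking that the factor $2$ in the exponents, the factor $\tfrac{1}{2}$ in the conformal curvature formula, and the constraint $\sum_j \lambda_j = 1$ interlock correctly; this normalization is exactly what makes the intrinsic curvature of $\mathbf{g}_{{}_{\Sigma}}$ cancel against the conformal correction $\tfrac{1}{2}\triangle \ln \sigma$, simultaneously flattening the product metric (\ref{CR generalized 2}) and balancing the Ricci contribution inside (\ref{CR generalized 1}). The Weierstrass data $({\mathbf{G}}, d\mathbf{h})$ mentioned in the hypotheses are not actually needed for the argument itself; they only enter when one wishes to express $\sigma$ or the logarithmic quotient in closed complex-analytic form.
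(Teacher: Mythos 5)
Your proof is correct and follows essentially the same route as the paper: both arguments rest on the flatness of the Chern and Ricci metrics from Proposition \ref{two flat} together with the conformal curvature formula (\ref{curvature cc}), differing only in whether one unpacks the resulting Laplacian identities against $\mathbf{g}_{{}_{\Sigma}}$ (as you do) or writes the product metric as $e^{h}\,{\mathbf{g}}_{{}_{\textrm{Ricci}}}$ with $h$ the harmonic Chern--Ricci function (as the paper does). One small overstatement: the normalization $\lambda_{1}+\cdots+\lambda_{N}=1$ is decisive only for the flatness of (\ref{CR generalized 2}); harmonicity of (\ref{CR generalized 1}) holds for arbitrary weights, since each summand is already harmonic by Proposition \ref{two flat}(c) and harmonicity is preserved under linear combinations.
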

 
 \begin{proof}
By Proposition \ref{two flat}, each component function of the Chern-Ricci function (\ref{CR generalized 1}) is harmonic on $\Sigma$.  
 Recall that the Ricci metric $ {\mathbf{g}}_{{}_{\textrm{Ricci}}} = \sqrt{- {\mathcal{K}}_{ {\mathbf{g}}_{{}_{\Sigma}}} } \, {\mathbf{g}}_{{}_{\Sigma}}$ is flat.
 Since the Chern-Ricci function (\ref{CR generalized 1}) is harmonic,  by the curvature formula (\ref{curvature cc}) in Proposition \ref{two flat}, 
 the conformally changed metric  
 \[
  \left[ \prod_{ j = 1}^{N}      { \left( 1 + \mathbf{n}_{{}_{ {\mathbf{V}}_{j} }} \right)}^{2 {\lambda}_{j} }  \right] \mathbf{g}_{{}_{\Sigma}}  
  =       \prod_{ j = 1}^{N}  \, {\left[ \,  \frac{  { \left( 1 + \mathbf{n}_{{}_{ {\mathbf{V}}_{j} }} \right)}^{2 }  }{  \sqrt{  \,- \mathcal{K} \, }  } 
 \,  \right]}^{ {\lambda}_{j}}  \, {\mathbf{g}}_{{}_{\textrm{Ricci}}} 
 \] 
 should be also flat. 
 \end{proof}

\begin{lemma}[\textbf{Chern-Ricci harmonic functions in terms of the Weierstrass data}]   \label{CRW}
We identify a given constant unit vector field ${\mathbf{V}}$ by the point ${\pi}^{-1}\left( \alpha \right) \in {\mathbb{S}}^{2}$, 
where we denote $\pi : {\mathbb{S}}^{2} \to \overline{\, \mathbb{C} \,}$ the stereographic projection with respect to the north pole. 
 On a negatively curved minimal surface with the Weierstrass data $\left({\mathbf{G}}, d\mathbf{h}  \right)$ and the unit normal 
 vector field ${\mathbf{n}}={\pi}^{-1}\left(  \mathbf{G} \right)$, the Chern-Ricci harmonic functions
 associated to ${\mathbf{V}}$  (see Lemma (\ref{CR def})) are given in terms of the Weierstrass data as follows.
\begin{enumerate}
\item[\textbf{(a)}] 
\begin{equation}   \label{CR a}
  \ln \frac{ \, {\left( 1 - \mathbf{n}_{{}_{\mathbf{V}}} \right)}^{2} \, }{\sqrt{-\mathcal{K}}}   = \begin{cases}
 \ln \, \left[ \;  {\left( \frac{2}{\, 1 +{ \left\vert \alpha \right\vert}^{2} \,} \right)}^{2}  \left\vert \, \frac{  {\, ( \mathbf{G} - \alpha )}^{4}  \,}{  4\mathbf{G}} \, \frac{d\mathbf{h}}{d\mathbf{G}} \, \right\vert \; \right],  \quad \alpha \in \overline{\, \mathbb{C} \,} - 
  \{ \infty \}, \\
  \\
  \ln \, \left[ \;  {\left( \frac{2}{\, 1 + \frac{1}{{ \left\vert \alpha \right\vert}^{2}}  \,} \right)}^{2}  \left\vert \, \frac{  {\, ( \frac{1}{\,\alpha\,} {\mathbf{G}} - 1 )}^{4}  \,}{  4\mathbf{G}} \, \frac{d\mathbf{h}}{d\mathbf{G}} \, \right\vert \; \right],  \quad \alpha \in \overline{\, \mathbb{C} \,} - \{ 0 \}.
  \end{cases}
\end{equation} 
\item[\textbf{(b)}] 
\begin{equation}   \label{CR b}
  \ln \frac{ \, {\left( 1 + \mathbf{n}_{{}_{\mathbf{V}}} \right)}^{2} \, }{\sqrt{-\mathcal{K}}}   = \begin{cases}
 \ln \, \left[ \;  {\left( \frac{2}{\, 1 +{ \left\vert \alpha \right\vert}^{2} \,} \right)}^{2}  \left\vert \, \frac{  {\, ( \overline{\alpha} \mathbf{G} - 1 )}^{4}  \,}{  4\mathbf{G}} \, \frac{d\mathbf{h}}{d\mathbf{G}} \, \right\vert \; \right],  \quad \alpha \in \overline{\, \mathbb{C} \,} - 
  \{ \infty \}, \\
  \\
  \ln \, \left[ \;  {\left( \frac{2}{\, 1 + \frac{1}{{ \left\vert \alpha \right\vert}^{2}}  \,} \right)}^{2}  \left\vert \, \frac{  {\, ( {\mathbf{G}} - \frac{1}{\,\overline{\alpha}\,}  )}^{4}  \,}{  4\mathbf{G}} \, \frac{d\mathbf{h}}{d\mathbf{G}} \, \right\vert \; \right],  \quad \alpha \in \overline{\, \mathbb{C} \,} - \{ 0 \}.
  \end{cases}
\end{equation} 
\item[\textbf{(c)}]  
\begin{equation}    \label{CR c}
  \ln    \frac{ \, { \left( 1 - {\mathbf{n}_{{}_{\mathbf{V}}}}^{2}  \right)}  \,}{  \sqrt{  \,- \mathcal{K} \, }  }   = \begin{cases}
 \ln \, \left[ \;  {\left( \frac{2 \left\vert \alpha \right\vert }{\, 1 +{ \left\vert \alpha \right\vert}^{2} \,} \right)}^{2}  \left\vert \, \frac{  {( \mathbf{G} - \alpha )}^{2} 
  {( \mathbf{G} + \frac{1}{\,\overline{\alpha}\,} )}^{2} \,}{  4\mathbf{G}} \, \frac{d\mathbf{h}}{d\mathbf{G}} \, \right\vert \; \right],  \quad \alpha \in \overline{\, \mathbb{C} \,} - 
  \{0,  \infty \}, \\
  \\
  \ln \,    \left\vert \, \frac{  1 }{\,  \mathbf{G} \,} \, \frac{d\mathbf{h}}{d\mathbf{G}} \, \right\vert,  \quad \alpha \in  \{ 0, \infty \}.
  \end{cases}
\end{equation} 
\end{enumerate}
\end{lemma}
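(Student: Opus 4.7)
The plan is to reduce each identity to elementary algebra in the Weierstrass data by combining two classical ingredients: the chordal distance identity on $\mathbb{S}^{2}$, which rewrites the angle function $\mathbf{n}_{{}_{\mathbf{V}}}$ as an expression in $\mathbf{G}$ and $\alpha$, and the standard Enneper-Weierstrass expression for the Gauss curvature.

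First I would compute $1 \mp \mathbf{n}_{{}_{\mathbf{V}}}$. Since $\mathbf{n} = \pi^{-1}(\mathbf{G})$ and $\mathbf{V} = \pi^{-1}(\alpha)$ both lie on $\mathbb{S}^{2}$, the chordal distance formula
\[
|\pi^{-1}(\mathbf{G}) - \pi^{-1}(\alpha)|^{2} = \frac{4 |\mathbf{G} - \alpha|^{2}}{(1+|\mathbf{G}|^{2})(1+|\alpha|^{2})} = 2 (1 - \mathbf{n}_{{}_{\mathbf{V}}})
\]
yields the clean expression for $1 - \mathbf{n}_{{}_{\mathbf{V}}}$ in terms of $|\mathbf{G} - \alpha|$. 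Applying the same computation to the antipodal vector $-\mathbf{V}$, which corresponds to $\alpha \mapsto -1/\bar{\alpha}$, produces the companion identity
\[
1 + \mathbf{n}_{{}_{\mathbf{V}}} = \frac{2 |1 + \bar{\alpha}\mathbf{G}|^{2}}{(1+|\mathbf{G}|^{2})(1+|\alpha|^{2})}.
\]
Multiplying the two factors and using the relation $|1+\bar{\alpha}\mathbf{G}|^{2} = |\alpha|^{2}|\mathbf{G}+1/\bar{\alpha}|^{2}$ (valid for $\alpha \neq 0$) then supplies the factorization of $1 - \mathbf{n}_{{}_{\mathbf{V}}}^{2}$ needed for item (c).

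Second, I would write the induced conformal factor in a local parameter $z$ as $\lambda = \tfrac{1+|\mathbf{G}|^{2}}{2|\mathbf{G}|}|d\mathbf{h}/dz|$ and compute $\Delta \log \lambda$. Since $\log |\mathbf{G}|$ and $\log |d\mathbf{h}/dz|$ are harmonic, only $\log(1 + \mathbf{G}\bar{\mathbf{G}})$ contributes to the Laplacian, which produces the standard Weierstrass identity
\[
\sqrt{-\mathcal{K}} = \frac{4 |\mathbf{G}|\, |d\mathbf{G}/d\mathbf{h}|}{(1+|\mathbf{G}|^{2})^{2}}.
\]
Substituting this identity together with the formulas for $1 \mp \mathbf{n}_{{}_{\mathbf{V}}}$ into the three ratios $(1-\mathbf{n}_{{}_{\mathbf{V}}})^{2}/\sqrt{-\mathcal{K}}$, $(1+\mathbf{n}_{{}_{\mathbf{V}}})^{2}/\sqrt{-\mathcal{K}}$, and $(1-\mathbf{n}_{{}_{\mathbf{V}}}^{2})/\sqrt{-\mathcal{K}}$ cancels the $(1+|\mathbf{G}|^{2})^{2}$ factors and yields the closed forms listed in (a), (b), (c) for every $\alpha \in \overline{\mathbb{C}} \setminus \{0, \infty\}$.

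Finally, the boundary cases $\alpha \in \{0, \infty\}$ are handled by the stereographic chart change $\alpha \mapsto 1/\alpha$, which is precisely what toggles the piecewise definition in each identity, or equivalently by substituting $\mathbf{V} = (0,0,\pm 1)$ directly and computing with $\mathbf{n}_{3} = (|\mathbf{G}|^{2}-1)/(1+|\mathbf{G}|^{2})$. The whole argument is algebraic and no genuine obstacle arises; the only technical care required is the bookkeeping of antipodal substitutions on the Riemann sphere, so that the identities remain well-defined when the reference vector $\mathbf{V}$ points to either pole of $\mathbb{S}^{2}$.
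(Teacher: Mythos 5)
Your proposal is correct and follows essentially the same route as the paper: compute $1-\mathbf{n}_{{}_{\mathbf{V}}}=\frac{2\left|\mathbf{G}-\alpha\right|^{2}}{(1+|\mathbf{G}|^{2})(1+|\alpha|^{2})}$ from the stereographic/chordal identity, invoke the standard formula $\sqrt{-\mathcal{K}}=\frac{4|\mathbf{G}|\,|d\mathbf{G}/d\mathbf{h}|}{(1+|\mathbf{G}|^{2})^{2}}$ (which you rederive rather than cite), and obtain (b) and (c) by the antipodal substitution and multiplication. Note that your careful bookkeeping $-\mathbf{V}\leftrightarrow -1/\overline{\alpha}$ yields $1+\mathbf{n}_{{}_{\mathbf{V}}}=\frac{2\left|1+\overline{\alpha}\mathbf{G}\right|^{2}}{(1+|\mathbf{G}|^{2})(1+|\alpha|^{2})}$, which differs by a sign from the printed $(\overline{\alpha}\mathbf{G}-1)^{4}$ in (\ref{CR b}) (the paper's proof substitutes $1/\overline{\alpha}$, the equatorial reflection rather than the antipode); your version is the one consistent with the factorization $\left|1+\overline{\alpha}\mathbf{G}\right|^{2}=|\alpha|^{2}\left|\mathbf{G}+\tfrac{1}{\overline{\alpha}}\right|^{2}$ used in (\ref{CR c}), so you have in effect corrected a sign typo in the statement.
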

 
\begin{proof}  We first deduce the equalities in (\ref{CR a}). We need to compute the Jacobi field ${\mathbf{n}}_{{}_{\mathbf{V}}}= {\langle \mathbf{n}, {\mathbf{V}} \rangle}_{{\mathbb{R}}^{3}}$. By the definition of the stereographic projection $\pi : {\mathbb{S}}^{2} \to \overline{\, \mathbb{C} \,}$, we have
\[
  {\mathbf{n}}= \frac{1}{ {\left\vert \mathbf{G} \right\vert}^{2}  + 1 }
 \begin{bmatrix}
   \, 2 \, \textrm{Re} \left( \mathbf{G} \right)  \, \\
   \, 2 \, \textrm{Im} \left( \mathbf{G} \right)  \, \\
  \, {\left\vert \mathbf{G} \right\vert}^{2}  - 1 \,
 \end{bmatrix}
\quad 
\text{and} \quad 
  {\mathbf{V}}= \frac{1}{ {\left\vert \alpha \right\vert}^{2}  + 1 }
 \begin{bmatrix}
   \, 2 \, \textrm{Re} \left( \alpha \right)  \, \\
   \, 2 \, \textrm{Im} \left( \alpha \right)  \, \\
  \, {\left\vert \alpha \right\vert}^{2}  - 1 \,
 \end{bmatrix}.
\]
A straightforward computation yields 
\begin{equation} \label{Jacobi field}
    1 - \mathbf{n}_{{}_{\mathbf{V}}}   = \begin{cases}
  \frac{2}{\, 1 +{ \left\vert \alpha \right\vert}^{2} \,}  \frac{\,\left\vert  {( \mathbf{G} - \alpha )}^{2} \right\vert\, }{1 + {\vert \mathbf{G} \vert}^{2}  },  \quad \alpha \in \overline{\, \mathbb{C} \,} - 
  \{ \infty \}, \\
  \\
 \frac{2}{\, 1 + \frac{1}{{ \left\vert \alpha \right\vert}^{2}} \,}   \frac{\,\left\vert  {( \frac{1}{\alpha} {\mathbf{G}} - 1 )}^{2} \right\vert\, }{1 + {\vert \mathbf{G} \vert}^{2}  },  \quad \alpha \in \overline{\, \mathbb{C} \,} - \{ 0 \}.
  \end{cases}
\end{equation}
As well-known (for instance, see \cite[p. 5]{Kar2003} and \cite[Chapter 9]{Oss86}), the curvature ${\mathcal{K}}$ of the induced metric 
${\mathbf{g}}_{{}_{\Sigma}}  = \frac{1}{4} {\left( \frac{1}{  \, \vert \mathbf{G} \vert \,} + \vert \mathbf{G} \vert     \right)}^{2}  {\vert d \mathbf{h} \vert}^2$
is given in terms of the Weierstrass data: 
\begin{equation} \label{curvature K}
  \sqrt{-  {\mathcal{K}} \, }= \sqrt{ - {\mathcal{K}}_{ {\mathbf{g}}_{{}_{\Sigma}} } \,}= {\left(     \frac{2}{ \; \frac{1}{  \, \vert \mathbf{G} \vert \,} + \vert \mathbf{G} \vert \; }     \right)}^{2} 
   \left\vert \,  \frac{1}{\, \mathbf{G} \frac{\, d\mathbf{h} \,}{\, d \mathbf{G} \,} \, } \, \right\vert.
\end{equation}
Combining (\ref{Jacobi field}) and (\ref{curvature K}) gives the equalities in (\ref{CR a}). Replacing the pair $\left({\mathbf{V}}, \alpha \right)$ in (\ref{CR a}) by 
$\left(-{\mathbf{V}}, \frac{1}{\,\overline{\alpha}\,} \right)$ and observing the identity $\mathbf{n}_{{}_{ - \mathbf{V}}}= -  \mathbf{n}_{{}_{\mathbf{V}}} $ yield the equalities in (\ref{CR b}). Adding the equalities in (\ref{CR a}) and  (\ref{CR b}) gives the equalities in (\ref{CR c}).
\end{proof}

 \section{Classifications of minimal surfaces with constant Chern-Ricci functions}

We classify minimal surfaces in terms of Chern-Ricci harmonic functions, and examine the limit behaviors of periodic minimal surfaces. 

\begin{theorem}[\textbf{Rigidity of the associate family of Scherk's surfaces}] \label{Scherk Uniqueness Main}
Given a minimal surface with the Gauss curvature $\mathcal{K}<0$ and the unit normal $\mathbf{n}$, if there exists two \textbf{orthogonal} constant unit vector fields ${\mathbf{V}}_{1}$ and ${\mathbf{V}}_{2}$ such that the Chern-Ricci harmonic function (introduced in Lemma \ref{CR def}) 
\begin{equation} \label{Scherk two Jacobi Main}
\sum_{\mathbf{V} \in \left\{ {\mathbf{V}}_{1}, -{\mathbf{V}}_{1}, {\mathbf{V}}_{2}, - {\mathbf{V}}_{2} \right\}}  \ln \frac{\, {\left(1 -  {\mathbf{n}}_{{}_{{\mathbf{V}} }} 
   \right)}^2 \,}{\sqrt{ \, -{\mathcal{K}}} \,}  
  \end{equation}
is constant, then the minimal surface should be congruent to (a part of) a member of the associate family connecting a doubly periodic Scherk graph and a singly periodic Scherk tower (up to homotheties). 
\end{theorem}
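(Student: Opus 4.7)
The plan is to convert the hypothesis into a rigid holomorphic equation on the Weierstrass data $(\mathbf{G}, d\mathbf{h})$ via Lemma \ref{CRW}(c), solve that equation, and identify its solutions as the Scherk associate family.

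Since the hypothesis and conclusion are both invariant under orientation-preserving rigid motions of $\mathbb{R}^{3}$, I first apply an ambient rotation to arrange $\mathbf{V}_{1}=\varepsilon_{1}$ and $\mathbf{V}_{2}=\varepsilon_{2}$; under stereographic projection from the north pole these correspond to $\alpha_{1}=1$ and $\alpha_{2}=i$. Using the identity $\mathbf{n}_{-\mathbf{V}}=-\mathbf{n}_{\mathbf{V}}$, the four-term sum (\ref{Scherk two Jacobi Main}) collapses pairwise to
\[ 2\,\ln\frac{(1-\mathbf{n}_{\mathbf{V}_{1}}^{2})(1-\mathbf{n}_{\mathbf{V}_{2}}^{2})}{-\mathcal{K}}, \]
so the hypothesis is equivalent to the Scherk curvature identity (\ref{FO estimate twisted}) up to a positive multiplicative constant.

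Feeding $\alpha=1$ and $\alpha=i$ into Lemma \ref{CRW}(c) and summing the two expressions yields
\[ \ln\!\left|\,\frac{(\mathbf{G}^{4}-1)^{2}}{16\,\mathbf{G}^{2}}\left(\frac{d\mathbf{h}}{d\mathbf{G}}\right)^{2}\right|\;\equiv\;\text{const}. \]
The strict negativity of $\mathcal{K}$ rules out branch points of the Gauss map, so $\mathbf{G}$ serves as a local holomorphic coordinate and the bracketed quantity is a meromorphic function of constant modulus; by the open mapping theorem it must be a nonzero complex constant. Extracting a square root gives
\[ \frac{d\mathbf{h}}{d\mathbf{G}}=\frac{c\,\mathbf{G}}{\mathbf{G}^{4}-1}\qquad\text{for some nonzero } c\in\mathbb{C}. \]
This is precisely the Weierstrass pair of the Scherk associate family: writing $c=re^{i\theta}$, the modulus $r>0$ is absorbed by a homothety and the phase $e^{i\theta}$ is the associate-family parameter interpolating between the doubly periodic Scherk graph ($\theta=0$) and the singly periodic Scherk tower ($\theta=\pi/2$). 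By Schwarz's rigidity of minimal surfaces, any simply connected patch whose Weierstrass data takes this form is congruent to a piece of the corresponding Scherk member.

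The most delicate step will be the passage from constancy of $|F(\mathbf{G})|$ to constancy of the underlying meromorphic function $F(\mathbf{G})$, together with the subsequent square-root extraction. The former requires $\mathbf{G}$ to be an honest local complex coordinate, which is guaranteed because the branch points of the Gauss map coincide with the zero set of $\mathcal{K}$, ruled out by hypothesis; the latter introduces only a sign ambiguity $c\mapsto -c$, corresponding to the harmless shift $\theta\mapsto\theta+\pi$ within the Scherk associate family.
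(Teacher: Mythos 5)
Your proof is correct and follows essentially the same route as the paper: normalize $\mathbf{V}_{1},\mathbf{V}_{2}$ to $\varepsilon_{1},\varepsilon_{2}$ (so $\alpha\in\{\pm 1,\pm i\}$ under stereographic projection), translate the constancy hypothesis via Lemma \ref{CRW} into constancy of $\bigl|\,(\mathbf{G}^{4}-1)^{2}\,\mathbf{G}^{-2}(d\mathbf{h}/d\mathbf{G})^{2}\,\bigr|$, and conclude $\mathbf{G}^{-1}d\mathbf{h}=\rho e^{i\Theta}(\mathbf{G}^{4}-1)^{-1}d\mathbf{G}$, the Weierstrass data of the Scherk associate family. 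The only cosmetic difference is that you pair $\pm\mathbf{V}$ first and invoke part (c) of Lemma \ref{CRW} at $\alpha=1,i$, whereas the paper applies part (a) at all four points; you also make explicit the open-mapping/square-root step and the final identification via Schwarz's theorem, which the paper's proof leaves implicit.
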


\begin{proof} Without loss of generality, after applying rotations, we could take the identifications
{\small{
\[
{\mathbf{V}}_{1} = 
 \begin{bmatrix}
   \, 1  \, \\
   \, 0  \, \\
  \, 0 \,
 \end{bmatrix} = {\pi}^{-1} \left(1\right), \;\; 
-{\mathbf{V}}_{1} = 
 \begin{bmatrix}
   \, -1  \, \\
   \, 0  \, \\
  \, 0 \,
 \end{bmatrix}= {\pi}^{-1} \left(-1\right), \;\; 
 {\mathbf{V}}_{2} = 
 \begin{bmatrix}
   \, 0  \, \\
   \, 1  \, \\
  \, 0 \,
 \end{bmatrix}= {\pi}^{-1} \left(i\right), \;\; 
-{\mathbf{V}}_{2} = 
 \begin{bmatrix}
   \, 0  \, \\
   \, -1  \, \\
  \, 0 \,
 \end{bmatrix}= {\pi}^{-1} \left(-i\right).
\]
}}
Here, recall that $\pi : {\mathbb{S}}^{2} \to \overline{\, \mathbb{C} \,}$ denotes the stereographic projection from the north pole. 
Let  $\left({\mathbf{G}}, d\mathbf{h}  \right)$ denote the Weierstrass data. We use the formula (\ref{CR a}) in Lemma \ref{CRW} 
{\small{
 \[
 \ln \frac{ \, {\left( 1 - \mathbf{n}_{{}_{\mathbf{V}}} \right)}^{2} \, }{\sqrt{-\mathcal{K}}}   = 
 \ln \, \left[ \;  {\left( \frac{2}{\, 1 +{ \left\vert \alpha \right\vert}^{2} \,} \right)}^{2}  \left\vert \, \frac{  {\, ( \mathbf{G} - \alpha )}^{4}  \,}{  4\mathbf{G}} \, \frac{d\mathbf{h}}{\,d\mathbf{G}\,} \, \right\vert \; \right] 
\]
}}
for $\alpha \in \{1, -1, i, -1 \}$. The constancy of the function (\ref{Scherk two Jacobi Main}) guarantees that the product function
{\small{
\[
  \left\vert    \,   \frac{1}{  {\mathbf{G}}^{4} }  {\left( \frac{d\mathbf{h}}{\,d\mathbf{G}\,} \right)}^{4} \,
   \prod_{ \alpha \in \{1, -1, i, -1 \}  }    ( \mathbf{G} - \alpha )^4   \right\vert \, = 
     {\left\vert    \,   \frac{1}{  {\mathbf{G}}  }   \frac{d\mathbf{h}}{\,d\mathbf{G}\,}    \,
     ( {\mathbf{G}}^4 - 1 )   \right\vert}^{4} \,
\]
}}
is a positive constant. Hence, there exist constants $\rho>0$ and $\Theta \in \left[0, 2\pi \right]$ such that 
\[
\frac{1}{  {\mathbf{G}}  }   {d\mathbf{h}} = \frac{\rho \, {e}^{i\Theta} }{{\mathbf{G}}^4 - 1} {d\mathbf{G}}.
\] 
\end{proof}

\begin{remark}[\textbf{Rigidity of Scherk's surfaces in terms of two flat structures}] \label{StwoFS} We first explain that Theorem \ref{Scherk Uniqueness Main} is equivalent to Theorem \ref{Scherk Uniqueness}.
Indeed, taking account into the identity $\mathbf{n}_{{}_{ - \mathbf{V}}}= -  \mathbf{n}_{{}_{\mathbf{V}}}$ and letting $\mathcal{V}:=\left\{ {\mathbf{V}}_{1}, -{\mathbf{V}}_{1}, {\mathbf{V}}_{2}, - {\mathbf{V}}_{2} \right\}$, we have
\[
 \sum_{\mathbf{V} \in \mathcal{V}}  \ln \frac{\, {\left(1 -  {\mathbf{n}}_{{}_{{\mathbf{V}} }} 
   \right)}^2 \,}{\sqrt{ \, -{\mathcal{K}}} \,} =  \sum_{\mathbf{V} \in \mathcal{V}}  \ln \frac{\, {\left(1 +  {\mathbf{n}}_{{}_{{\mathbf{V}} }} 
   \right)}^2 \,}{\sqrt{ \, -{\mathcal{K}}} \,} = 2\sum_{\mathbf{V} \in  \left\{ {\mathbf{V}}_{1},   {\mathbf{V}}_{2}  \right\} }  \ln \frac{\,  1 -  { {\mathbf{n}}_{{}_{{\mathbf{V}} }} }^{2}  \,}{\sqrt{ \, -{\mathcal{K}}} \,} =4   \ln \frac{\, \sqrt{ \left( 1 -  { {\mathbf{n}}_{{}_{ {{\mathbf{V}}_{1} } }} }^{2} \right)  \left( 1 -  { {\mathbf{n}}_{{}_{ {{\mathbf{V}}_{2} } }} }^{2} \right)} \,}{\sqrt{ \, -{\mathcal{K}}} \,}. 
\] 
Theorem \ref{Scherk Uniqueness Main} shows that, on a minimal surface, the flat metric
\[
\sqrt{ \left( 1 -  { {\mathbf{n}}_{{}_{ {{\mathbf{V}}_{1} } }} }^{2} \right)\left( 1 -  { {\mathbf{n}}_{{}_{ {{\mathbf{V}}_{2} } }} }^{2} \right)    } \,  {\mathbf{g}}_{{}_{\Sigma}} 
\]
could become a constant multiple of the flat metric ${\mathbf{g}}_{{}_{\textrm{Ricci}}} = \sqrt{- {\mathcal{K}}_{ {\mathbf{g}}_{{}_{\Sigma}}} } \, {\mathbf{g}}_{{}_{\Sigma}}$,
only when it belongs to the associate family of Scherk's surfaces.
\end{remark}

 \begin{figure}[H]
 \centering
 \includegraphics[height=4.250cm]{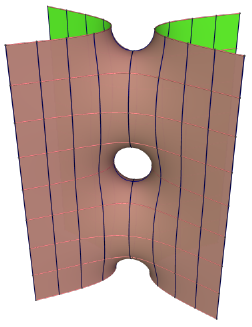}
 \caption{\small{An approximation \cite{WebEnn} of a part of a \textbf{sheared} singly periodic Scherk's surface}} 
 \end{figure}

\begin{theorem}[\textbf{Rigidity of the associate family of generalized Scherk's surfaces}] \label{Scherk Uniqueness Main 2}
Given a minimal surface with the Gauss curvature $\mathcal{K}<0$ and the unit normal $\mathbf{n}$, if there exists two distinct unit vector fields ${\mathbf{V}}_{1}$ and ${\mathbf{V}}_{2}$ such that the Chern-Ricci harmonic function 
\begin{equation} \label{Scherk two Jacobi Main 2}
\sum_{\mathbf{V} \in \left\{ {\mathbf{V}}_{1}, -{\mathbf{V}}_{1}, {\mathbf{V}}_{2}, - {\mathbf{V}}_{2} \right\}}  \ln \frac{\, {\left(1 -  {\mathbf{n}}_{{}_{{\mathbf{V}} }} 
   \right)}^2 \,}{\sqrt{ \, -{\mathcal{K}}} \,}  
  \end{equation}
is constant, then the minimal surface should be congruent to (a part of) a member of the associate family connecting a generalized doubly periodic Scherk graph and a generalized singly periodic Scherk tower.  
\end{theorem}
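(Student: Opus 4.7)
\bigskip
\noindent\textbf{Proof proposal.} The plan is to follow the structure of the proof of Theorem \ref{Scherk Uniqueness Main}, replacing the orthogonal configuration $\{1,-1,i,-i\} \subset \overline{\,\mathbb{C}\,}$ by a generic four-point configuration arising from two distinct (but not necessarily orthogonal) unit vectors and their antipodes. First, after applying a rigid rotation in ${\mathbb{R}}^3$, one may identify the unit vectors with two points $\alpha_{1}, \alpha_{2} \in \overline{\,\mathbb{C}\,}$ via ${\mathbf{V}}_{j}={\pi}^{-1}(\alpha_{j})$; since the antipodal map on ${\mathbb{S}}^{2}$ corresponds under $\pi$ to $\zeta \mapsto -\frac{1}{\,\overline{\zeta}\,}$, one has $-{\mathbf{V}}_{j}={\pi}^{-1}\bigl(-\frac{1}{\overline{\alpha_{j}}}\bigr)$. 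The distinctness assumption ${\mathbf{V}}_{1}\neq\pm{\mathbf{V}}_{2}$ (implicit in the statement, to avoid a trivial reduction) guarantees that the four points $\mathcal{A}:=\bigl\{\alpha_{1},-\tfrac{1}{\overline{\alpha_{1}}},\alpha_{2},-\tfrac{1}{\overline{\alpha_{2}}}\bigr\}$ are distinct.

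Next, I would apply the Weierstrass formula (\ref{CR a}) of Lemma \ref{CRW} to each of the four vector fields in $\bigl\{{\mathbf{V}}_{1},-{\mathbf{V}}_{1},{\mathbf{V}}_{2},-{\mathbf{V}}_{2}\bigr\}$. The hypothesis that the Chern-Ricci harmonic function (\ref{Scherk two Jacobi Main 2}) is a real constant then translates, after cancellation of the harmless prefactors ${\bigl(\tfrac{2}{1+|\alpha|^{2}}\bigr)}^{2}$, into the statement that the modulus of the meromorphic function
\[
F({\mathbf{G}}) \;:=\; \frac{1}{\,\mathbf{G}\,}\,\frac{d\mathbf{h}}{d\mathbf{G}}\,\prod_{\alpha\in\mathcal{A}}(\mathbf{G}-\alpha)
\]
is a positive constant on the minimal surface. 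Since $F$ is a holomorphic function of $\mathbf{G}$ (away from the Weierstrass poles) with constant modulus, it must itself be a (nonzero) complex constant. Hence there exist $\rho>0$ and $\Theta\in[0,2\pi]$ with
\[
\frac{1}{\,\mathbf{G}\,}\,d\mathbf{h} \;=\; \frac{\rho\,e^{i\Theta}}{\,\displaystyle\prod_{\alpha\in\mathcal{A}}(\mathbf{G}-\alpha)\,}\,d\mathbf{G},
\]
which exactly recovers the Weierstrass data of a two-parameter family of complete, periodic minimal surfaces, with the phase $e^{i\Theta}$ parameterizing the associate-family deformation; the endpoints $\Theta=0$ and $\Theta=\tfrac{\pi}{2}$ correspond to the doubly periodic and singly periodic members, respectively.

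The final step is to identify this family, for \emph{generic} $\alpha_{1},\alpha_{2}$ (i.e.\ when the four points of $\mathcal{A}$ are not the vertices of a square inscribed in the equator), as the associate family of the \textbf{sheared} (generalized) Scherk surfaces depicted in Figure \ref{StwoFS} and in the figure following Remark \ref{StwoFS}. When $\alpha_{2}=i\alpha_{1}$ the set $\mathcal{A}$ is invariant under multiplication by $i$, and a M\"obius normalization of $\mathbf{G}$ brings the data to the form of Theorem \ref{Scherk Uniqueness Main}, recovering classical Scherk; for arbitrary distinct $\alpha_{1},\alpha_{2}$ the four poles of $\frac{1}{\mathbf{G}}d\mathbf{h}$ still lie in two antipodal pairs on ${\mathbb{S}}^{2}$, and the resulting minimal surfaces are precisely the sheared Scherk towers (and their doubly periodic conjugates), whose asymptotic half-planes meet along two lines at an angle determined by the cross-ratio of $\mathcal{A}$.

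The main obstacle I anticipate is this last identification: the algebraic step that produces the Weierstrass data is a short calculation, but matching the resulting one-parameter family $\{e^{i\Theta}\}$ with the classically known sheared Scherk associate family (and verifying the full period-closing behaviour that justifies the label ``generalized doubly periodic Scherk graph and generalized singly periodic Scherk tower'') requires invoking the known classification of the Scherk family by its Weierstrass data, either through a direct period computation or through a reference to the Karcher/Meeks--Wolf description of saddle-tower surfaces. The orthogonality hypothesis in Theorem \ref{Scherk Uniqueness Main} was exactly what made the product $\prod_{\alpha\in\mathcal{A}}(\mathbf{G}-\alpha)=\mathbf{G}^{4}-1$ collapse to a clean polynomial; in the general case, the corresponding identification is conceptually the same but must be phrased in terms of the pole configuration $\mathcal{A}$ rather than a single closed-form polynomial.
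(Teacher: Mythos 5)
Your proposal follows the same core computation as the paper's proof: apply formula (\ref{CR a}) of Lemma \ref{CRW} to each of the four vector fields, observe that the constancy of the Chern--Ricci function forces the modulus of the meromorphic expression $\frac{1}{\mathbf{G}}\frac{d\mathbf{h}}{d\mathbf{G}}\prod_{\alpha\in\mathcal{A}}(\mathbf{G}-\alpha)$ to be a positive constant, conclude that the expression is itself a nonzero complex constant, and read off the Weierstrass data. The one substantive difference is that you leave the pole configuration $\mathcal{A}$ in general position and consequently flag the final identification with the generalized Scherk family as the ``main obstacle.'' The paper dissolves that obstacle before it arises: since $\mathbf{V}_{1}$ and $\mathbf{V}_{2}$ span a plane through the origin, a rotation of ${\mathbb{R}}^{3}$ carries that plane to the equatorial plane of ${\mathbb{S}}^{2}$, and a further rotation about the vertical axis makes the two vectors symmetric about the $x$-axis, so that $\mathcal{A}=\left\{e^{i\theta},-e^{i\theta},e^{-i\theta},-e^{-i\theta}\right\}$; on the equator the antipodal map is simply $\zeta\mapsto-\zeta$, because $-1/\overline{\zeta}=-\zeta$ when $\left\vert\zeta\right\vert=1$. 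With this normalization the product collapses to the closed-form polynomial $\left({\mathbf{G}}^{2}-e^{2i\theta}\right)\left({\mathbf{G}}^{2}-e^{-2i\theta}\right)$, and the resulting data $\frac{1}{\mathbf{G}}d\mathbf{h}=\frac{\rho e^{i\Theta}}{\left({\mathbf{G}}^{2}-e^{2i\theta}\right)\left({\mathbf{G}}^{2}-e^{-2i\theta}\right)}d\mathbf{G}$ is recognized, by citation to \cite{Douglas2014, Kar1988, Weber2001}, as exactly the Weierstrass data of the associate family of generalized (sheared) Scherk surfaces --- no period computation or cross-ratio analysis is required at the level of rigor the paper adopts. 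So your argument is essentially correct and essentially the paper's; the fix is to push the rotation you already invoke all the way to this equatorial normal form, which also sidesteps the minor nuisance that in general position a point of $\mathcal{A}$ could be $0$ or $\infty$, forcing a switch between the two branches of (\ref{CR a}).
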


\begin{proof} Without loss of generality, after applying rotations, we could take the identifications
{\small{
\[
{\mathbf{V}}_{1} =   {\pi}^{-1} \left( e^{i \theta} \right), \;\; 
-{\mathbf{V}}_{1} =  {\pi}^{-1} \left( - e^{i \theta} \right), \;\; 
{\mathbf{V}}_{2} =   {\pi}^{-1} \left( e^{-i \theta} \right), \;\; 
-{\mathbf{V}}_{2} =  {\pi}^{-1} \left( - e^{-i \theta} \right),
\]
}}
where $\pi : {\mathbb{S}}^{2} \to \overline{\, \mathbb{C} \,}$ is the stereographic projection. We use the formula (\ref{CR a}) in Lemma \ref{CRW} 
 \[
 \ln \frac{ \, {\left( 1 - \mathbf{n}_{{}_{\mathbf{V}}} \right)}^{2} \, }{\sqrt{-\mathcal{K}}}   = 
 \ln \, \left[ \;  {\left( \frac{2}{\, 1 +{ \left\vert \alpha \right\vert}^{2} \,} \right)}^{2}  \left\vert \, \frac{  {\, ( \mathbf{G} - \alpha )}^{4}  \,}{  4\mathbf{G}} \, \frac{d\mathbf{h}}{\,d\mathbf{G}\,} \, \right\vert \; \right]
\]
for $\alpha   \in \{e^{i \theta}, -e^{i \theta}, e^{-i \theta}, -e^{-i \theta} \}$.
The constancy of the function (\ref{Scherk two Jacobi Main 2}) guarantees that the product 
\[
  \left\vert    \,   \frac{1}{  {\mathbf{G}}^{4} }  {\left( \frac{d\mathbf{h}}{\,d\mathbf{G}\,} \right)}^{4} \,
   \prod_{ \alpha \in \{e^{i \theta}, -e^{i \theta}, e^{-i \theta}, -e^{-i \theta} \}  }    ( \mathbf{G} - \alpha )^4   \right\vert \, = 
     {\left\vert    \,   \frac{1}{  {\mathbf{G}}  }   \frac{d\mathbf{h}}{\,d\mathbf{G}\,}    \,
    \left( {\mathbf{G}}^{2} - {e^{2i \theta}} \right) \left( {\mathbf{G}}^{2} - {e^{-2i \theta}} \right)    \right\vert}^{4} \,
\]
is a positive constant. We recover the Weierstrass data for the associate family of generalized Scherk's surfaces \cite{Douglas2014, Kar1988, Weber2001}:
\[
\frac{1}{  {\mathbf{G}}  }  {d\mathbf{h}}  = \frac{\rho \, {e}^{i\Theta} }{\,  \left( {\mathbf{G}}^{2} - {e^{2i \theta}} \right) \left( {\mathbf{G}}^{2} - {e^{-2i \theta}} \right)  \, } {d\mathbf{G}}
\] 
for some constants $\rho>0$ and $\Theta \in \left[0, 2\pi \right]$.    
\end{proof}

\begin{example}[\textbf{triply periodic tCLP surfaces} connecting \textbf{doubly periodic Scherk surfaces} and \textbf{singly periodic Scherk surfaces}] \label{CLP data}
Imagine eight points on the unit circle ${\mathbb{S}}^{1}$ in the $xy$-plane:
\begin{equation} \label{tCLP vertices}
\pm {\mathbf{V}}_{1} = 
\pm \begin{bmatrix}
   \, \cos \theta  \, \\
   \, \sin \theta  \, \\
  \, 0  \,
 \end{bmatrix}, \;\; 
\pm{\mathbf{V}}_{2} = 
\pm \begin{bmatrix}
    \, \sin \theta  \, \\
   \, \cos \theta  \, \\
  \, 0 \,
 \end{bmatrix}, \;\; 
\pm {\mathbf{V}}_{3} = 
\pm \begin{bmatrix}
    \, - \sin \theta  \, \\
    \, \cos \theta  \, \\
  \, 0  \,
 \end{bmatrix}, \;\; 
\pm {\mathbf{V}}_{4} = 
 \pm \begin{bmatrix}
    \, - \cos \theta \, \\
    \, \sin \theta  \, \\
   \, 0 \,
 \end{bmatrix}.
\end{equation}
For $\theta \in \left(0, \frac{\pi}{4} \right)$,  after identifying these eight vertices in ${\mathbb{S}}^{1} \subset {\mathbb{S}}^{2} \subset {\mathbb{R}}^{3}$ as the constant unit vector fields, we want to construct minimal surfaces so that the Chern-Ricci harmonic function
\begin{equation} \label{D one}
\sum_{\mathbf{V} \in \left\{ \pm {\mathbf{V}}_{1}, \pm {\mathbf{V}}_{2}, \pm {\mathbf{V}}_{3}, \pm {\mathbf{V}}_{4} \right\}}  \ln \frac{\, {\left(1 -  {\mathbf{n}}_{{}_{{\mathbf{V}} }}   \right)}^2 \,}{\sqrt{ \, -{\mathcal{K}}} \,}   
 \end{equation}
is constant.  When $\theta \shortarrow{7} 0$ or $\theta \shortarrow{1} \frac{\pi}{4}$, can we 
geometrically prescribe the limit minimal surfaces?  We observe that the eight vertices consist of two congruent rectangles in the $xy$-plane. 
\begin{enumerate}
\item[\textbf{(a)}] When $\theta \shortarrow{7} 0$, two congruent rectangles \emph{collapses} to the union of two orthogonal segments. 
We expect that the limit surfaces has the constant Chern-Ricci harmonic function
\begin{equation} \label{D one 1}
\sum_{\mathbf{V} \in \left\{ \pm {\left(1, 0, 0 \right)}^{\intercal}, \, \pm {\left(0, 1, 0 \right)}^{\intercal} \right\}}  \ln \frac{\, {\left(1 -  {\mathbf{n}}_{{}_{{\mathbf{V}} }}   \right)}^2 \,}{\sqrt{ \, -{\mathcal{K}}} \,}.   
 \end{equation}
\item[\textbf{(b)}] When $\theta \shortarrow{1} \frac{\pi}{4}$, two congruent rectangles \emph{collapses} to the union of the orthogonal segments. We expect that the limit surfaces has the constant Chern-Ricci harmonic function
\begin{equation} \label{D one 2}
\sum_{\mathbf{V} \in \left\{ \pm {\left(\cos \left( \frac{\pi}{4} \right),   \sin \left( \frac{\pi}{4} \right), 0 \right)}^{\intercal}, \, \pm {\left(\cos \left( \frac{3\pi}{4} \right), 
\sin \left( \frac{3\pi}{4} \right), 0 \right)}^{\intercal} \right\}}  \ln \frac{\, {\left(1 -  {\mathbf{n}}_{{}_{{\mathbf{V}} }}   \right)}^2 \,}{\sqrt{ \, -{\mathcal{K}}} \,}.   
 \end{equation}
\item[] 
\end{enumerate}
Theorem \ref{Scherk Uniqueness Main} guarantees that such limit surfaces are members of the associate family from a \textbf{doubly periodic Scherk surface} to a \textbf{singly periodic Scherk surface}. We find minimal surfaces with a constant Chern-Ricci harmonic function in (\ref{D one}).
For  $\theta \in \left(0, \frac{\pi}{4} \right)$, we have the image  
\[
  \mathcal{A} := {\pi} \left(\, \left\{ \pm {\mathbf{V}}_{1}, \pm {\mathbf{V}}_{2}, \pm {\mathbf{V}}_{3}, \pm {\mathbf{V}}_{4} \right\} \, \right) 
   = \left\{ \, \pm e^{i \theta}, \,   \pm i e^{i \theta}, \,   \pm e^{-i \theta}, \,  \pm i e^{-i \theta} \, \right\},
\]
where $\pi : {\mathbb{S}}^{2} \to \overline{\, \mathbb{C} \,}$ is the stereographic projection. According to the formula (\ref{CR a}) in Lemma \ref{CRW} 
{\small{
 \[
 \ln \frac{ \, {\left( 1 - \mathbf{n}_{{}_{\mathbf{V}}} \right)}^{2} \, }{\sqrt{-\mathcal{K}}}   = 
 \ln \, \left[ \;  {\left( \frac{2}{\, 1 +{ \left\vert \alpha \right\vert}^{2} \,} \right)}^{2}  \left\vert \, \frac{  {\, ( \mathbf{G} - \alpha )}^{4}  \,}{  4\mathbf{G}} \, \frac{d\mathbf{h}}{\,d\mathbf{G}\,} \, \right\vert \; \right], 
\]
}}
we see that the constancy of the prescribed Chern-Ricci function (\ref{D one}) implies that constancy of   
\[
  \left\vert    \,   \frac{1}{  {\mathbf{G}}^{8} }  {\left( \frac{d\mathbf{h}}{\,d\mathbf{G}\,} \right)}^{8} \,
       {\left( {\mathbf{G}}^{2} - {e}^{2i \theta} \right)}^4   {\left( {\mathbf{G}}^{2} + {e}^{2i \theta} \right)}^4   {\left( {\mathbf{G}}^{2} - {e}^{-2i \theta} \right)}^4   {\left( {\mathbf{G}}^{2} + {e}^{-2i \theta} \right)}^4      \, \right\vert.  
\]
This recovers, up to associate families, the {triply periodic} minimal surface ${\Sigma}_{\lambda \in \left( -2, 2\right)}$ in \textbf{tCLP family} \cite{ES2018, KPS2014} with the Weierstrass data  
\[
 \frac{1}{ {\mathbf{G}} }  {d\mathbf{h}}  = \frac{1}{\, \sqrt{ {\mathbf{G}}^{8} + \lambda {\mathbf{G}}^{4} + 1 }\,} d{\mathbf{G} \, }, \quad
 \lambda = - 2 \cos \left( 4 \theta \right) \in \left(-2, 2\right).
\]
\begin{enumerate}
\item[\textbf{(a)}] The limit surface  ${\Sigma}_{-2}$ recovers the \textbf{doubly periodic Scherk surface} with   
\[
 \frac{1}{ {\mathbf{G}} }  {d\mathbf{h}}  = \frac{1}{\, {\mathbf{G}}^{4} - 1 \,} d{\mathbf{G} \, }. 
\]
\item[\textbf{(b)}] The limit surface  ${\Sigma}_{2}$ recovers the \textbf{singly periodic Scherk surface} with   
\[
 \frac{1}{ {\mathbf{G}} }  {d\mathbf{h}}  = \frac{1}{\, {\mathbf{G}}^{4} + 1 \,} d{\mathbf{G} \, }. 
\]
\end{enumerate}
\end{example}

\begin{example}[From \textbf{triply periodic Schwarz D surface} to \textbf{doubly periodic Scherk surface}] \label{tD data} Imagine a cube 
inscribed in the round unit 
sphere ${\mathbb{S}}^{2}$ sitting in ${\mathbb{R}}^{3}$. The side length of the cube is $\sqrt{\frac{4}{3}}$. Taking $\theta={\sin}^{-1}\left( \, \sqrt{{\frac{1}{3}}}\, \right)$, we obtain an example of such cube with the vertices in ${\mathbb{S}}^{2}$:
{\small{
\begin{equation} \label{tD points}
\pm {\mathbf{V}}_{1} = 
\pm \begin{bmatrix}
   \, \cos \theta  \, \\
   \, 0  \, \\
  \, \sin \theta \,
 \end{bmatrix}, \;\; 
\pm{\mathbf{V}}_{2} = 
\pm \begin{bmatrix}
    \, \cos \theta  \, \\
   \, 0  \, \\
  \, - \sin \theta \,
 \end{bmatrix}, \;\; 
\pm {\mathbf{V}}_{3} = 
\pm \begin{bmatrix}
    \, 0  \, \\
    \, \cos \theta  \, \\
  \, \sin \theta \,
 \end{bmatrix}, \;\; 
\pm {\mathbf{V}}_{4} = 
 \pm \begin{bmatrix}
    \, 0  \, \\
    \, \cos \theta  \, \\
   \, -\sin \theta \,
 \end{bmatrix}.
\end{equation}
}}
For  $\theta \in \left(0, \frac{\pi}{2} \right)$, after identifying these eight vertices in ${\mathbb{S}}^{2}$ as the constant unit vector fields, we want to construct minimal surfaces so that the Chern-Ricci harmonic function
\begin{equation} \label{tD CR}
\sum_{\mathbf{V} \in \left\{ \pm {\mathbf{V}}_{1}, \pm {\mathbf{V}}_{2}, \pm {\mathbf{V}}_{3}, \pm {\mathbf{V}}_{4} \right\}}  \ln \frac{\, {\left(1 -  {\mathbf{n}}_{{}_{{\mathbf{V}} }}   \right)}^2 \,}{\sqrt{ \, -{\mathcal{K}}} \,}   
 \end{equation}
is constant. For  $\theta \in \left(0, \frac{\pi}{2} \right)$, taking $a=\tan \left( \frac{\pi}{4} - \frac{\theta}{2} \right)$, we have the image  
\[
  \mathcal{A} := {\pi} \left(\, \left\{ \pm {\mathbf{V}}_{1}, \pm {\mathbf{V}}_{2}, \pm {\mathbf{V}}_{3}, \pm {\mathbf{V}}_{4} \right\} \, \right) 
   = \left\{a, - \frac{1}{a}, a i , - \frac{1}{ \, \overline{a i}} = \frac{i}{a}, -a, \frac{1}{a}, -a i, - \frac{1}{ \, \overline{-a i}} = -\frac{i}{a} \, \right\},
\]
where $\pi : {\mathbb{S}}^{2} \to \overline{\, \mathbb{C} \,}$ is the stereographic projection. According to the formula (\ref{CR a}) in Lemma \ref{CRW} 
{\small{
 \[
 \ln \frac{ \, {\left( 1 - \mathbf{n}_{{}_{\mathbf{V}}} \right)}^{2} \, }{\sqrt{-\mathcal{K}}}   = 
 \ln \, \left[ \;  {\left( \frac{2}{\, 1 +{ \left\vert \alpha \right\vert}^{2} \,} \right)}^{2}  \left\vert \, \frac{  {\, ( \mathbf{G} - \alpha )}^{4}  \,}{  4\mathbf{G}} \, \frac{d\mathbf{h}}{\,d\mathbf{G}\,} \, \right\vert \; \right], 
\]
}}
we see that the constancy of the prescribed Chern-Ricci function (\ref{D one}) implies that constancy of   
\[
  \left\vert    \,   \frac{1}{  {\mathbf{G}}^{8} }  {\left( \frac{d\mathbf{h}}{\,d\mathbf{G}\,} \right)}^{8} \,
       {\left( {\mathbf{G}}^{4} - {a}^{4} \right)}^4  {\left( {\mathbf{G}}^{4} - \frac{1}{{a}^{4}} \right)}^4     \, \right\vert.  
\]
This recovers the {triply periodic} minimal surface ${\Sigma}_{\lambda \in \left( -\infty, -2\right)}$ in \textbf{tD family} determined by 
\[
 \frac{1}{ {\mathbf{G}} }  {d\mathbf{h}}  = \frac{1}{\, \sqrt{ {\mathbf{G}}^{8} + \lambda {\mathbf{G}}^{4} + 1 }\,} d{\mathbf{G} \, }, 
\]
up to associate families (and homotheties). It is straightforward to check that   
\[
  \lambda = - \left(  {a}^{4} + \frac{1}{ {a}^{4} } \right) = 2 -   {\left[ \,  a^{2}+ \frac{1}{a^2} \, \right]}^{2} = - 2 - 16 \frac{{\sin}^{2} \theta}{{\cos}^{4} \theta}<-2.
\]
\begin{enumerate}
\item[\textbf{(a)}] For $\theta={\sin}^{-1}\left( \, \sqrt{{\frac{1}{3}}}\, \right)$, we find that the eight vertices in (\ref{tD points}) consist of a cube inscribed in the unit 
sphere, and meet \textbf{Schwarz diamond surface} ${\Sigma}_{-14}$. 
\item[\textbf{(b)}] The limit surface ${\Sigma}_{-2}$ is the doubly periodic Scherk surface with the Weierstrass data
\[
    \frac{1}{ {\mathbf{G}} }  {d\mathbf{h}}    =   \frac{1}{\,    {\mathbf{G}}^{4}  - 1    \,}  d{\mathbf{G} \, }.
\]
\end{enumerate}
\end{example}

  \begin{figure}[H]
 \centering
 \includegraphics[height=4.20cm]{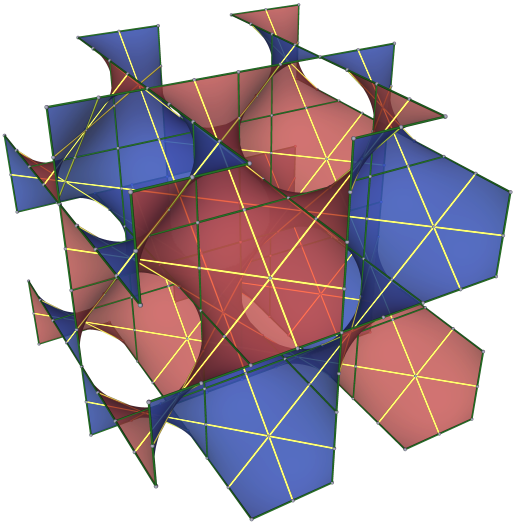} \quad \quad 
  \includegraphics[height=4.20cm]{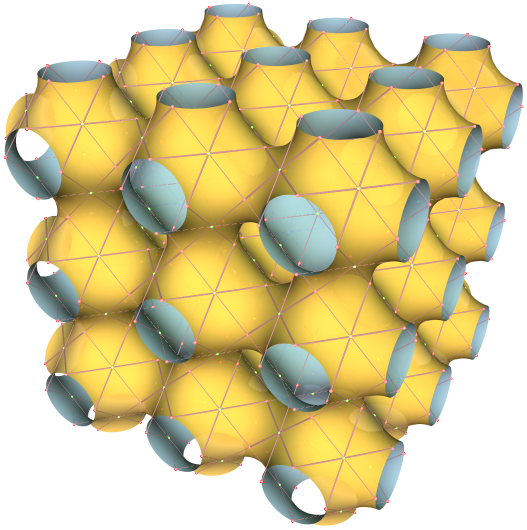}
 \caption{\small{Approximations \cite{WebEnn} of parts of triply periodic Schwarz D and P surfaces}} 
 \end{figure}

\begin{example}[From \textbf{triply periodic Schwarz P surface} to \textbf{singly periodic Scherk tower}] \label{tP data}
The {triply periodic} minimal surface ${\Sigma}_{\lambda \in \left( 2, \infty \right)}$ in \textbf{tP family} admits the Weierstrass data 
\[
 \frac{1}{ {\mathbf{G}} }  {d\mathbf{h}}  = \frac{1}{\, \sqrt{ {\mathbf{G}}^{8} + \lambda {\mathbf{G}}^{4} + 1 }\,} d{\mathbf{G} \, }. 
\]
\begin{enumerate}
\item[\textbf{(a)}] ${\Sigma}_{14}$ becomes the \textbf{Schwarz primitive surface}. 
\item[\textbf{(b)}] The limit surface ${\Sigma}_{2}$ induces the  \textbf{singly periodic Scherk tower} with the Weierstrass data
\[
    \frac{1}{ {\mathbf{G}} }  {d\mathbf{h}}   =   \frac{1}{\,    {\mathbf{G}}^{4}  + 1    \,}  d{\mathbf{G} \, }.
\]
\end{enumerate}
\end{example}

\begin{remark}
Meeks' family \cite[p. 912]{Meeks 1990} includes minimal surfaces in Example \ref{CLP data}, \ref{tD data}, \ref{tP data}.
\end{remark}

\begin{proposition}[{Conjugate surfaces of minimal surfaces in Example \ref{CLP data}, \ref{tD data}, \ref{tP data}}] 
Let ${\Sigma}_{\lambda \in \mathbb{R}}$ denote the minimal surface with the Weierstrass data
 \[
   {d\mathbf{h}}  = \frac{ {\mathbf{G}}}{\, \sqrt{ {\mathbf{G}}^{8} + \lambda {\mathbf{G}}^{4} + 1 }\,} d{\mathbf{G} \, }. 
\]
Then, the conjugate minimal surface of ${\Sigma}_{\lambda}$ is congruent to the minimal surface ${\Sigma}_{-\lambda}$. See also \cite[Example 6.3]{KPS2014}. In particular, the 
Schwarz CLP surface ${\Sigma}_{0}$ \cite{EFS2015, Kar1989, Schwarz1890} is self-conjugate. The Schwarz P surface ${\Sigma}_{14}$ in the tP family is conjugate to 
the Schwarz D surface ${\Sigma}_{-14}$ in the tD family.
\end{proposition}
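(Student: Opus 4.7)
The plan is to combine the standard recipe for forming the conjugate minimal surface with a single rotation of the Gauss map that absorbs both the extra factor of $i$ produced by conjugation and the sign flip of $\lambda$. Recall two general facts: (i) if a minimal surface carries the Weierstrass data $(\mathbf{G}, d\mathbf{h})$, then its conjugate surface carries the data $(\mathbf{G}, i\, d\mathbf{h})$; and (ii) an ambient rotation of $\mathbb{R}^3$ by an angle $\phi$ about the $\mathbf{x}_3$-axis takes $(\mathbf{G}, d\mathbf{h})$ to $(e^{i\phi}\mathbf{G}, d\mathbf{h})$, because stereographic projection converts rotation about the vertical axis on $\mathbb{S}^2$ into multiplication by $e^{i\phi}$ on $\overline{\mathbb{C}}$, while $d\mathbf{h}$ is preserved since $\mathbf{x}_3$ is.

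Applying (i) first, the conjugate surface $\Sigma_\lambda^{*}$ carries the Weierstrass data
\[
\left( \mathbf{G},\; \frac{i\, \mathbf{G}\, d\mathbf{G}}{\sqrt{\,\mathbf{G}^8 + \lambda \mathbf{G}^4 + 1\,}} \right).
\]
Next, reparametrize by $\tilde{\mathbf{G}} = e^{i\pi/4}\mathbf{G}$; equivalently, substitute $\mathbf{G} = e^{-i\pi/4}\tilde{\mathbf{G}}$. The algebraic miracle is that
\[
\mathbf{G}^4 = -\tilde{\mathbf{G}}^4, \qquad \mathbf{G}^8 = \tilde{\mathbf{G}}^8, \qquad \mathbf{G}\, d\mathbf{G} = e^{-i\pi/2}\, \tilde{\mathbf{G}}\, d\tilde{\mathbf{G}} = -i\, \tilde{\mathbf{G}}\, d\tilde{\mathbf{G}},
\]
so the radicand flips its middle sign, the outer factor of $i$ cancels the $-i$, and the transformed height differential becomes
\[
i\, \frac{\mathbf{G}\, d\mathbf{G}}{\sqrt{\,\mathbf{G}^8 + \lambda \mathbf{G}^4 + 1\,}} \;=\; \frac{\tilde{\mathbf{G}}\, d\tilde{\mathbf{G}}}{\sqrt{\,\tilde{\mathbf{G}}^8 - \lambda \tilde{\mathbf{G}}^4 + 1\,}}.
\]
By fact (ii), the change of Gauss map $\mathbf{G} \mapsto e^{i\pi/4}\mathbf{G}$ is implemented by rotating the surface by $\pi/4$ about the $\mathbf{x}_3$-axis; hence $\Sigma_\lambda^{*}$ so rotated is read off to have exactly the Weierstrass data of $\Sigma_{-\lambda}$, and the two surfaces are congruent in $\mathbb{R}^3$.

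The step I expect to require the most care is not the algebraic cancellation, which is essentially forced by the relations $(e^{i\pi/4})^4 = -1$ and $(e^{i\pi/4})^8 = 1$, but rather the bookkeeping of the square-root branches and the closure of periods: the substitution $\mathbf{G} \mapsto e^{-i\pi/4}\tilde{\mathbf{G}}$ permutes the eight zeros of $\mathbf{G}^{8} + \lambda \mathbf{G}^{4} + 1$ with the eight zeros of $\tilde{\mathbf{G}}^{8} - \lambda \tilde{\mathbf{G}}^{4} + 1$, and one must check that the branch cuts and the period lattices transport compatibly so that the identification descends from the universal cover to a genuine congruence of the triply periodic surfaces themselves. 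Once this compatibility is verified, the proposition follows.
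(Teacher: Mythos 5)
Your proposal is correct and follows essentially the same route as the paper: the paper's proof consists precisely of observing that $(e^{i\pi/4})^4=-1$, $(e^{i\pi/4})^8=1$, so that the rotated Gauss map $\widetilde{\mathbf{G}}=e^{i\pi/4}\mathbf{G}$ converts the conjugate data $i\,d\mathbf{h}$ of $\Sigma_\lambda$ into the data of $\Sigma_{-\lambda}$. Your additional remarks on branch and period bookkeeping go beyond what the paper records, but the core argument is identical.
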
 

\begin{proof}
Since $\left(  e^{i \frac{\pi}{4}}  \right)^{2}=i$, $\left(  e^{i \frac{\pi}{4}}  \right)^{4}=-1$, and $\left(  e^{i \frac{\pi}{4}}  \right)^{8}=1$, the rotated Gauss map 
$\widetilde{\,\mathbf{G}\,} :=  e^{i \frac{\pi}{4}}  \mathbf{G}$ solves 
\[
\frac{i \, {\mathbf{G}}}{\, \sqrt{ {\mathbf{G}}^{8} + \lambda {\mathbf{G}}^{4} + 1 }\,} d{\mathbf{G} \, }= \frac{ \widetilde{\,\mathbf{G}\,}}{\, \sqrt{ {  {\, \widetilde{\,\mathbf{G}\,} \,}   }^{8} - \lambda {{\, \widetilde{\,\mathbf{G}\,} \,}  }^{4} + 1 }\,} d{{\widetilde{\,\mathbf{G}\,} \,}  }. 
\]
\end{proof}

   \begin{figure}[H]
 \centering
 \includegraphics[height=4.05cm]{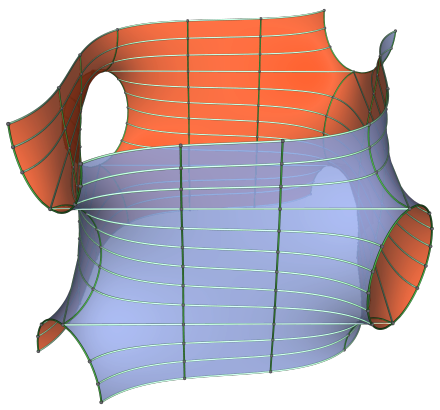} 
  \quad \quad  \includegraphics[height=4.20cm]{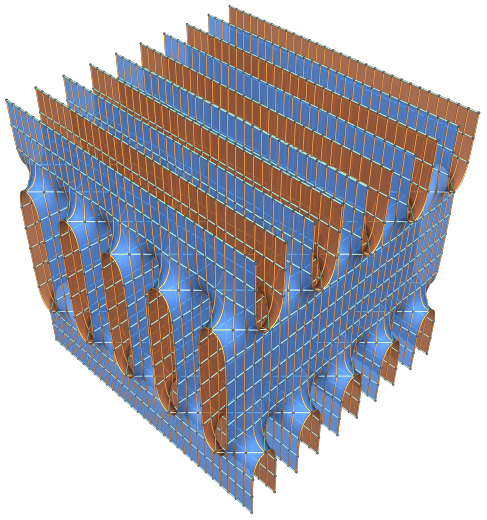}
 \caption{\small{Approximations \cite{WebEnn} of parts of triply periodic Schwarz CLP surfaces}} 
 \end{figure}

\begin{example}[{\textbf{rPD family} containing both \textbf{Schwarz P surface} and  \textbf{Schwarz D surface}}] \label{rPD data} 
 Imagine a regular tetrahedron inscribed in the round unit 
sphere ${\mathbb{S}}^{2}$ sitting in ${\mathbb{R}}^{3}$. The side length of the regular tetrahedron is $\sqrt{\frac{8}{3}}$. Taking $\theta={\sin}^{-1}\left( \, {\frac{1}{3}}\, \right)$, we obtain an example of such regular tetrahedron with the following vertices in ${\mathbb{S}}^{2}$:
\begin{equation} \label{rPD points}
  {\mathbf{V}}_{0} = 
  \begin{bmatrix}
   \, 0  \, \\
   \, 0  \, \\
  \, 1 \,
 \end{bmatrix}, \;\; 
 {\mathbf{V}}_{1} = 
  \begin{bmatrix}
    \, \cos \theta  \, \\
   \, 0  \, \\
  \, - \sin \theta \,
 \end{bmatrix}, \;\; 
  {\mathbf{V}}_{2} = 
  \begin{bmatrix}
    \, \cos \theta \cos  \left( \frac{2 \pi}{3} \right) \, \\
    \, \cos \theta \sin  \left( \frac{2 \pi}{3} \right) \, \\
  \, - \sin \theta \,
 \end{bmatrix}, \;\; 
  {\mathbf{V}}_{3} = 
   \begin{bmatrix}
   \, \cos \theta \cos  \left( - \frac{2 \pi}{3} \right) \, \\
    \, \cos \theta \sin  \left( - \frac{2 \pi}{3} \right) \, \\
  \, - \sin \theta \,
 \end{bmatrix}.
\end{equation}
For  $\theta \in \left( - \frac{\pi}{2}, \frac{\pi}{2} \right)$, after identifying the 8 points $\pm {\mathbf{V}}_{0}, \pm {\mathbf{V}}_{1}, \pm {\mathbf{V}}_{2}, \pm {\mathbf{V}}_{3}$ in ${\mathbb{S}}^{2}$ as the constant unit vector fields, we construct minimal surfaces such that the Chern-Ricci harmonic function
\begin{equation} \label{rPD one}
\sum_{\mathbf{V} \in \left\{ \pm {\mathbf{V}}_{0}, \pm {\mathbf{V}}_{1}, \pm {\mathbf{V}}_{2}, \pm {\mathbf{V}}_{3} \right\}}  \ln \frac{\, {\left(1 -  {\mathbf{n}}_{{}_{{\mathbf{V}} }}   \right)}^2 \,}{\sqrt{ \, -{\mathcal{K}}} \,}   
 \end{equation}
is constant. Taking $a=\tan \left( \frac{\pi}{4} - \frac{\theta}{2} \right)$ and $\omega=e^{i \frac{2 \pi }{3}}$, we have the image  
\[
  \mathcal{A} := {\pi} \left(\, \left\{ \pm {\mathbf{V}}_{0}, \pm {\mathbf{V}}_{1}, \pm {\mathbf{V}}_{2}, \pm {\mathbf{V}}_{3} \right\} \, \right) 
   = \left\{\infty,  a,   a {\omega}, a{ {\omega}^{2} }, 0, - \frac{1}{a},   - \frac{1}{a} {\omega},  - \frac{1}{a} {\omega}^{2}  \, \right\},
\]
where $\pi : {\mathbb{S}}^{2} \to \overline{\, \mathbb{C} \,}$ denotes the stereographic projection from the north pole.
To determine the Weierstrass data of desired minimal surfaces, we shall use the formula (\ref{CR a}) in Lemma \ref{CRW} 
{\small{
\[
  \ln \frac{ \, {\left( 1 - \mathbf{n}_{{}_{\mathbf{V}}} \right)}^{2} \, }{\sqrt{-\mathcal{K}}}   = \begin{cases}
   \ln \, \left\vert \;  {\mathbf{G}}^3 \frac{d\mathbf{h}}{d\mathbf{G}} \; \right\vert,  \quad \alpha =0, \\
 \ln \, \left[ \;  {\left( \frac{2}{\, 1 +{ \left\vert \alpha \right\vert}^{2} \,} \right)}^{2}  \left\vert \, \frac{  {\, ( \mathbf{G} - \alpha )}^{4}  \,}{  4\mathbf{G}} \, \frac{d\mathbf{h}}{d\mathbf{G}} \, \right\vert \; \right],  \quad \alpha \in \overline{\, \mathbb{C} \,} - 
  \{ 0, \infty \}, \\
  \ln \, \left\vert \; \frac{1}{\mathbf{G}} \frac{d\mathbf{h}}{d\mathbf{G}} \; \right\vert,  \quad \alpha =\infty.
  \end{cases}
\]
}}
We see that the constancy of the prescribed Chern-Ricci function (\ref{rPD one}) implies that constancy of   
\[
  \left\vert    \,   \frac{1}{  {\mathbf{G}}^{4} }  \,  {\left( {\mathbf{G}}^{3} - {a}^{3} \right)}^4   {\left( {\mathbf{G}}^{3} + \frac{1}{a^3} \right)}^4  
   \,  {\left( \frac{d\mathbf{h}}{\,d\mathbf{G}\,} \right)}^{8}    \, \right\vert.  
\]
This recovers, up to associate families, the {triply periodic} minimal surface ${\Sigma}_{a>0}$ in the \textbf{rPD family} \cite{Fogden1993, FH1999, Schwarz1890} with 
the Weierstrass data  
\[
 \frac{1}{ {\mathbf{G}} }  {d\mathbf{h}}  = \frac{1}{\, \sqrt{  {\mathbf{G}} {\left( {\mathbf{G}}^{3} - {a}^{3} \right)}    {\left( {\mathbf{G}}^{3} + \frac{1}{a^3} \right)}  }\,} d{\mathbf{G} \, }, \quad a>0.
\]
\begin{enumerate} 
\item[\textbf{(a)}] The conjugate surface of the minimal surface ${\Sigma}_{a}$ is congruent to ${\Sigma}_{\frac{1}{a}}$. In particular, the 
 minimal surface ${\Sigma}_{1}$ is self-conjugate.  
\item[\textbf{(b)}] When $a= \frac{1}{\sqrt{2}}$, we have $\theta={\sin}^{-1}\left( \, {\frac{1}{3}}\, \right)$. The eight vertices 
\[
\pm {\mathbf{V}}_{0}, \pm {\mathbf{V}}_{1}, \pm {\mathbf{V}}_{2}, \pm {\mathbf{V}}_{3}
\]
consists of a cube inscribed in ${\mathbb{S}}^{2}$. The surface ${\Sigma}_{\frac{1}{\sqrt{2}}}$ is the \textbf{Schwarz P surface}.  
\item[\textbf{(c)}] When $a = \sqrt{2}$, we have $\theta={\sin}^{-1}\left( \, - {\frac{1}{3}}\, \right)$. The eight vertices 
\[
\pm {\mathbf{V}}_{0}, \pm {\mathbf{V}}_{1}, \pm {\mathbf{V}}_{2}, \pm {\mathbf{V}}_{3}
\]
consists of a cube inscribed in ${\mathbb{S}}^{2}$.  The surface ${\Sigma}_{\sqrt{2}}$ is the \textbf{Schwarz D surface}.  
\end{enumerate}
 \end{example}

 \begin{figure}[H]
 \centering
 \includegraphics[height=4.20cm]{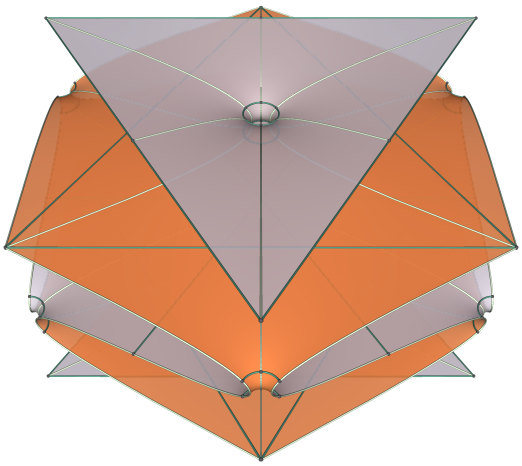}   \quad \quad   \includegraphics[height=4.20cm]{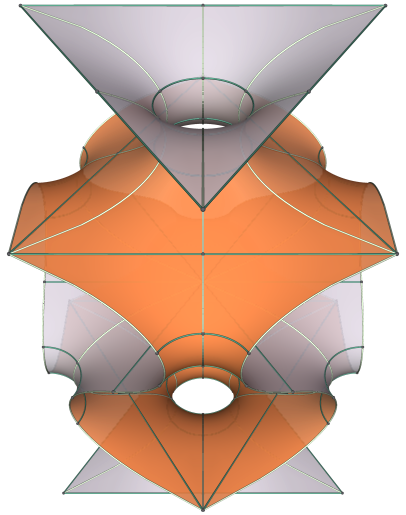} 
 \caption{\small{Approximations \cite{WebEnn} of parts of triply periodic Schwarz rPD surfaces}} 
 \end{figure}

\begin{example}[{\textbf{Karcher's saddle tower with 6 ends} as the limit of surfaces in \textbf{hCLP surfaces}}] \label{hCLP data} 
Fix the angle $\theta \in \left(0, \frac{\pi}{3}\right)$. Identifying the following eight points in ${\mathbb{S}}^{2}$   
\begin{equation} \label{hCLP points 1}
 {\mathbf{V}}_{1} = 
  \begin{bmatrix}
    \, \cos \theta  \, \\
   \,  \sin \theta  \, \\
  \, 0 \,
 \end{bmatrix}, \;\; 
  {\mathbf{V}}_{2} = 
  \begin{bmatrix}
    \,   \cos  \left( \theta+ \frac{2 \pi}{3} \right) \, \\
    \,   \sin  \left( \theta+ \frac{2 \pi}{3} \right) \, \\
  \, 0 \,
 \end{bmatrix}, \;\; 
  {\mathbf{V}}_{3} = 
   \begin{bmatrix}
   \,   \cos  \left( \theta + \frac{4 \pi}{3} \right) \, \\
    \,   \sin  \left( \theta + \frac{4 \pi}{3} \right) \, \\
  \, 0 \,
 \end{bmatrix}, 
  {\mathbf{V}}_{4} = 
  \begin{bmatrix}
   \, 0  \, \\
   \, 0  \, \\
  \, 1 \,
 \end{bmatrix}, \;\; 
 \end{equation}
 \begin{equation} \label{hCLP points 2}
   {\mathbf{V}}_{5} = 
  \begin{bmatrix}
    \, \cos \left( - \theta \right) \, \\
   \,  \sin \left( - \theta \right)  \, \\
  \, 0 \,
 \end{bmatrix}, \;\; 
  {\mathbf{V}}_{6} = 
  \begin{bmatrix}
    \,   \cos  \left( -\theta+ \frac{2 \pi}{3} \right) \, \\
    \,   \sin  \left( -\theta+ \frac{2 \pi}{3} \right) \, \\
  \, 0 \,
 \end{bmatrix}, \;\; 
  {\mathbf{V}}_{7} = 
   \begin{bmatrix}
   \,   \cos  \left( -\theta + \frac{4 \pi}{3} \right) \, \\
    \,   \sin  \left( -\theta + \frac{4 \pi}{3} \right) \, \\
  \, 0 \,
 \end{bmatrix}, 
  {\mathbf{V}}_{8} = 
  \begin{bmatrix}
   \, 0  \, \\
   \, 0  \, \\
  \, -1 \,
 \end{bmatrix}, \;\; 
  \end{equation}  
as the constant unit vector fields, we find minimal surfaces such that the Chern-Ricci function
\begin{equation} \label{hCLP one}
\sum_{\mathbf{V} \in \left\{ {\mathbf{V}}_{1}, \cdots, {\mathbf{V}}_{8} \right\}} \ln \frac{\, {\left(1 -  {\mathbf{n}}_{{}_{{\mathbf{V}} }}   \right)}^2 \,}{\sqrt{ \, -{\mathcal{K}}} \,}   
 \end{equation}
is constant. It recovers the triply periodic \textbf{hCLP surface}  \cite{EFS2015, FH1992} with 
the Weierstrass data  
\[
 \frac{1}{ {\mathbf{G}} }  {d\mathbf{h}}  = \frac{1}{\, \sqrt{  {\mathbf{G}} {\left(  {\mathbf{G}}^{6} -2 \cos \left(3 \theta \right){\mathbf{G}}^{3}+1 \right)}  }\,} d{\mathbf{G} \, }, \quad 
 \theta \in \left(0, \frac{\pi}{3}\right),
\]
up to associate families. As known in \cite[Theorem 1.2]{EFS2015}, when $\theta \shortarrow{1} \frac{\pi}{3}$, we recover \textbf{Karcher's saddle tower with 6 ends}  \cite{Kar1988}.
\end{example}
  
    \begin{figure}[H]
 \centering
 \includegraphics[height=4.50cm]{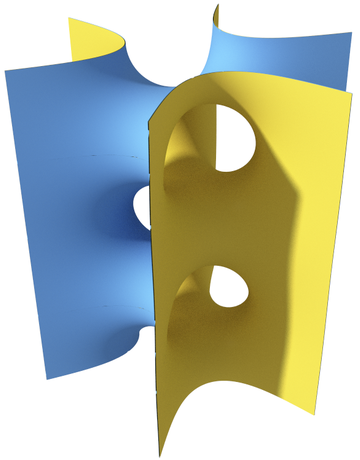}
 \caption{\small{An approximation \cite{WebKsaddle6} of a part of Karcher's saddle tower with 6 ends \cite{Kar1988}}} 
 \end{figure}

\begin{example}[{\textbf{Karcher's saddle tower with 6 ends} as the limit of Schwarz surfaces in \textbf{H family}}] \label{H data} 
Fix the angle $\theta \in \left(0, \frac{\pi}{2}\right)$. Identifying the following eight points in ${\mathbb{S}}^{2}$   
\begin{equation} \label{H points 1}
 {\mathbf{V}}_{1} = 
  \begin{bmatrix}
    \,- \cos \theta  \, \\
   \, 0  \, \\
  \,   \sin \theta \,
 \end{bmatrix}, \;\, 
  {\mathbf{V}}_{2} = 
  \begin{bmatrix}
    \, - \cos \theta  \cos  \left(  \frac{2 \pi}{3} \right) \, \\
    \, - \cos \theta  \sin  \left(  \frac{2 \pi}{3} \right) \, \\
  \,  \sin \theta  \,
 \end{bmatrix}, \;\, 
  {\mathbf{V}}_{3} = 
   \begin{bmatrix}
    \, - \cos \theta  \cos  \left(  \frac{4 \pi}{3} \right) \, \\
    \, - \cos \theta  \sin  \left(  \frac{4 \pi}{3} \right) \, \\
  \,  \sin \theta  \,
 \end{bmatrix}, \;\,
  {\mathbf{V}}_{4} = 
  \begin{bmatrix}
    \, 0  \, \\
   \, 0  \, \\
  \,  1 \,
 \end{bmatrix}, \;\, 
 \end{equation}
 \begin{equation} \label{H points 2}
  {\mathbf{V}}_{5} = 
  \begin{bmatrix}
    \,- \cos \theta  \, \\
   \, 0  \, \\
  \,  - \sin \theta \,
 \end{bmatrix}, \;\,  
  {\mathbf{V}}_{6} = 
  \begin{bmatrix}
    \, - \cos \theta  \cos  \left(  \frac{2 \pi}{3} \right) \, \\
    \, - \cos \theta  \sin  \left(  \frac{2 \pi}{3} \right) \, \\
  \, - \sin \theta  \,
 \end{bmatrix},\;\, 
  {\mathbf{V}}_{7} = 
   \begin{bmatrix}
    \, - \cos \theta  \cos  \left(  \frac{4 \pi}{3} \right) \, \\
    \, - \cos \theta  \sin  \left(  \frac{4 \pi}{3} \right) \, \\
  \,- \sin \theta  \,
 \end{bmatrix}, \;\, 
  {\mathbf{V}}_{8} = 
  \begin{bmatrix}
   \, 0  \, \\
   \, 0  \, \\
  \,  -1 \,
 \end{bmatrix}, \;\;  
  \end{equation} 
  as the constant unit vector fields, we find minimal surfaces such that the Chern-Ricci function
\begin{equation} \label{H one}
\sum_{\mathbf{V} \in \left\{ {\mathbf{V}}_{1}, \cdots, {\mathbf{V}}_{8} \right\}}  \ln \frac{\, {\left(1 -  {\mathbf{n}}_{{}_{{\mathbf{V}} }}   \right)}^2 \,}{\sqrt{ \, -{\mathcal{K}}} \,}   
 \end{equation}
is constant. Take $a=\tan \left( \frac{\pi}{4} - \frac{\theta}{2} \right) \in \left(0, 1\right)$.
It recovers, up to associate families, the  triply periodic minimal surface with 
the Weierstrass data  
\[
 \frac{1}{ {\mathbf{G}} }  {d\mathbf{h}}  = \frac{1}{\, \sqrt{  {\mathbf{G}} {\left(  {\mathbf{G}}^{6}  + \lambda{\mathbf{G}}^{3}+1 \right)}  }\,} d{\mathbf{G} \, }, \quad 
 \lambda =   {a}^{3} + \frac{1}{ {a}^{3} }  >2.
\]
 In terms of the rotated Gauss map $\widetilde{\,\mathbf{G}\,} :=  e^{i \frac{\pi}{3}}  \mathbf{G}$, we have  
 \[
 \frac{1}{ {\widetilde{\,\mathbf{G}\,}} }  {d\mathbf{h}}  = \pm \frac{i}{\, \sqrt{  {\widetilde{\,\mathbf{G}\,}} {\left(  {\widetilde{\,\mathbf{G}\,}}^{6}  - \lambda{\widetilde{\,\mathbf{G}\,}}^{3}+1 \right)}  }\,} d{\widetilde{\,\mathbf{G}\,} \, }=  \pm \frac{i}{\, \sqrt{  {\widetilde{\,\mathbf{G}\,}} {\left(  {\widetilde{\,\mathbf{G}\,}}^{3}  -  
   {a}^{3}   \right)}  \left(  {\widetilde{\,\mathbf{G}\,}}^{3}  -    \frac{1} {{a}^{3}}   \right) \,  }\,} d{\widetilde{\,\mathbf{G}\,} \, },
\]
which induces the Weierstrass data of \textbf{Schwarz H surfaces} \cite{Schwarz1890}. See also \cite[Example 2.3]{EFS2015}. As known in \cite[Theorem 1.2]{EFS2015}, when $a \shortarrow{1} 1$, it induces \textbf{Karcher's saddle tower with 6 ends}  \cite{Kar1988}.
\end{example}
  
  \begin{enumerate}
  \item[] 
  \end{enumerate}
  
 \noindent \textbf{Acknowledgement}. Part of this work was carried out while the author was visiting Jaigyoung Choe at Korea Institute for Advanced Study. 
 He would like to thank KIAS for its  hospitality.

    \begin{enumerate}
  \item[] 
  \end{enumerate}

\bigskip

\end{document}